\documentclass[12pt,letterpaper,reqno]{amsart}
\usepackage{graphicx}
\usepackage{amsmath}
\usepackage{amssymb}
\usepackage{amsfonts}
\usepackage{amsrefs}
\usepackage{enumerate}
\usepackage[mathscr]{eucal}

\usepackage[usenames]{color}

\setlength{\hoffset}{-0.55in} \addtolength{\textwidth}{1.10in}
\setlength{\voffset}{-0.180in} \addtolength{\textheight}{0.36in}

\newcommand{\R}{\mathbb{R}}

\newcommand{\cN}{{\mathcal N}}

\newtheorem{thm}{Theorem}
\newtheorem{lemma}[thm]{Lemma}
\newtheorem{prop}[thm]{Proposition}
\newtheorem{cor}[thm]{Corollary}

\theoremstyle{definition}
\newtheorem{definition}[thm]{Definition}

\theoremstyle{remark}
\newtheorem{remark}[thm]{Remark}
\newtheorem*{ack}{Acknowledgments}

\DeclareMathOperator{\interior}{int}

\DeclareMathOperator{\spt}{spt}
\DeclareMathOperator{\Id}{Id}
\DeclareMathOperator{\vol}{vol}

\begin{document}

\title[Smoothing the Bartnik boundary conditions]{Smoothing the Bartnik boundary conditions and other results on Bartnik's quasi-local mass}
\author{Jeffrey L. Jauregui}

\begin{abstract}
Quite a number of distinct versions of Bartnik's definition of quasi-local mass appear in the literature, and it is not a priori clear that any of them produce the same value in general. In this paper we make progress on reconciling these definitions. The source of discrepancies is two-fold: the choice of  boundary conditions (of which there are three variants) and the non-degeneracy or ``no-horizon'' condition (at least six variants). To address the boundary conditions, we show that given a 3-dimensional region $\Omega$ of nonnegative scalar curvature ($R \geq 0$) extended in a Lipschitz fashion across $\partial \Omega$ to an asymptotically flat 3-manifold with $R \geq 0$ (also holding distributionally along $\partial \Omega$), there exists a smoothing, arbitrarily small in $C^0$ norm, such that $R \geq 0$ and the geometry of $\Omega$ are preserved, and the ADM mass changes only by a small amount. With this we are able to show that the three boundary conditions yield equivalent Bartnik masses for two reasonable non-degeneracy conditions. We also discuss  subtleties pertaining to the various non-degeneracy conditions and produce a nontrivial inequality between a no-horizon version of the Bartnik mass and Bray's replacement of this with the outward-minimizing condition.
\end{abstract}

\maketitle

\section{Introduction}

The primary aim of this paper is to discuss and resolve some of the ambiguities pertaining to the numerous versions of Bartnik's quasi-local mass appearing in the literature. Recall that in general relativity,  the \emph{quasi-local mass problem} is to construct a ``suitable''  or ``sensible'' definition of the mass of a 3-dimensional bounded region in a spacelike hypersurface of a spacetime (cf. \cites{Pe, Ba5}). As is often done, we restrict to spacelike hypersurfaces that are totally geodesic in the spacetime and asymptotically flat (AF), and require the spacetime to satisfy the dominant energy condition. This implies that we are free to consider AF Riemannian manifolds $(M,g)$ of nonnegative scalar curvature without reference to the ambient spacetime. The notion of the \emph{total mass} of $(M,g)$ is well-established to be the ADM mass \cite{adm}; it is the problem of defining the mass of a bounded region, accounting for both the physical matter fields and the gravitational field itself, that remains difficult.

To make the discussion more precise and to recall Bartnik's quasi-local mass formulation, we give a definition.
\begin{definition}
An \emph{allowable region} is a smooth, connected, compact Riemannian 3-manifold $(\Omega,g_-)$ with connected, nonempty boundary $\partial \Omega$, where $g_-$ has nonnegative scalar curvature, and the mean curvature $H_-$ of $\partial \Omega$ in the outward direction is strictly positive.
\end{definition}

In 1989 R. Bartnik proposed a definition of quasi-local mass that essentially localizes the ADM mass \cite{Ba1}. Roughly, the idea is to first consider all AF spaces that ``extend'' $(\Omega, g_-)$ and, second, to minimize the ADM mass within this class. However, a point of confusion is that many distinct definitions of ``extend'' have appeared in the literature. Two aspects of the definition of extension vary: one involves the boundary condition of joining the extension to $\Omega$, which we discuss immediately below. The other is explained later as ``condition $\cN$'' (for ``non-degeneracy'' or ``no horizons'').

\begin{definition}
A smooth AF 3-manifold $(M,g_+)$ is an \emph{extension} of an allowable region $(\Omega, g_-)$ if there exists an isometry $\iota: \partial \Omega \to \partial M$ (with the induced metrics from $g_-$ and $g_+$, respectively). Using $\iota$, we identify $\partial \Omega$ and $\partial M$ (which we will also call $\Sigma$), thus gluing $(\Omega, g_-)$ to $(M,g_+)$ along $\Sigma$ to produce an AF manifold without boundary, denoted $M \cup \Omega$, whose Riemannian metric $G$ arising from $g_+$ and $g_-$ is Lipschitz everywhere and smooth away from $\Sigma$.  Let $H_+$ be the mean curvature of $\Sigma$ with respect to $g_+$. Regarding $H_-$ and $H_+$ as functions on $\Sigma$, 
we say an extension $(M,g_+)$ of $(\Omega, g_-)$ is
\begin{itemize}
\item \emph{Type 1}, if $G$ is smooth across $\Sigma$ (which implies $H_-=H_+$),
\item \emph{Type 2}, if $H_-=H_+$, and
\item \emph{Type 3}, if $H_- \geq H_+$.
\end{itemize}
An extension $(M,g_+)$ is \emph{admissible} if it has nonnegative scalar curvature and is Type 1, 2, or 3. The conditions $H_- = H_+$ and $g_+|_{T \Sigma} = g_-|_{T \Sigma}$ are the \emph{Bartnik boundary conditions}.
\end{definition}

Clearly a Type 1 extension is Type 2, and a Type 2 extension is Type 3. Bartnik originally considered Type 1 extensions, which equivalently may be viewed as smooth AF manifolds without boundary into which $(\Omega,g_-)$ embeds isometrically. For Type 2 and 3 extensions, the metric $G$ may only be Lipschitz across $\Sigma$, but the condition $H_- = H_+$, or even $H_- \geq H_+$, ensures that the scalar curvature is nonnegative across $\Sigma$ in a distributional sense (see, for example, section 2 of \cite{Mi2} for a thorough discussion of this, as well as following Lemma 3.1 of \cite{ST} and section 4 of \cite{Ba4}, for example). Type 2 extensions were first considered by Bartnik as limits of Type 1 extensions \cite{Ba3}. To the author's knowledge, Type 3 extensions were first considered in the positive mass theorem with corners of P. Miao \cite{Mi1} and Y. Shi and L.-F. Tam \cite{ST}, later appearing in the context of the Bartnik mass \cite{Mi2}.

Corresponding to the three types of extensions as above, we consider three versions of Bartnik's definition of quasi-local mass that have appeared in the literature. For $i=1,2,3$, define
\begin{align}
m_{B}^{(i)}(\Omega, g_-) &= \inf \left\{m_{ADM}(M,g_+) \; | \;  (M,g_+) \text{ is a Type $i$ admissible extension of } \right.\nonumber\\
&\qquad \qquad \qquad \qquad \qquad \qquad \left.(\Omega,g_-) \text{ satisfying condition } \cN \right\}.\label{eqn_m_B}
\end{align}
We will discuss condition $\cN$ shortly (and in much greater detail in section \ref{sec_N}). For now, observe that 
$$0 \leq m_B^{(3)}(\Omega, g_-) \leq m_B^{(2)}(\Omega, g_-) \leq m_B^{(1)}(\Omega, g_-),$$
where the first inequality follows from the positive mass theorem with corners \cite[Theorem 1]{Mi1}, \cite[Theorem 3.1]{ST}. (If no Type $i$ admissible extension satisfying condition $\cN$ exists, then the infimum in \eqref{eqn_m_B} is $+\infty$.) 

Why consider Type 2 or Type 3 extensions at all? Bartnik conjectured that the infimum in $m_B^{(1)}$ is achieved, but the minimizer would in general belong to the larger class of Type 2 extensions \cites{Ba1, Ba3} (cf. the discussion in \emph{Prior results} below). Thus, the minimizer would not belong to the class of allowed competitors unless all Type 2 admissible extensions were allowed to begin with. We argue that it is natural to expand to the larger class of Type 3 admissible extensions for the following reason. Any strict positivity of $H_- - H_+$ can be propagated into the interior of the extension as nonnegative scalar curvature; see \cite[Section 3.1]{Mi2}. Conversely, taking a limit of Type 1 admissible extensions can produce a Type 3 admissible extension that is not Type 2 if positive scalar curvature is concentrating near the boundary (as can be seen with simple examples in rotational symmetry). Therefore, allowing Type 2 but disallowing Type 3 extensions seems to be an arbitrary restriction. In any case, the distinction may not matter: it is conjectured (and has been proven for certain choices of $\cN$ --- see below under \emph{Prior results}) that a mass-minimizer among Type 3 admissible extensions is necessarily Type 2, i.e. the Bartnik boundary conditions hold. Also, Theorem \ref{thm_equal_som} establishes one setting in which Types 1, 2, and 3 lead to equivalent definitions of Bartnik mass.  Another reason for considering Type 2 or 3 extensions is the general philosophy that quasi-local mass ought to depend only on the \emph{Bartnik data}, i.e. the induced metric  and mean curvature on $\partial \Omega$, as opposed to the full geometry of $(\Omega, g_-)$. For Type 1 extensions this is not clear (see, however, Corollaries \ref{cor_bartnik_data}, \ref{cor_bartnik_data2}), but for Type 2 and 3 extensions it is (at least for most choices of $\cN$: see section \ref{sec_N}).

\medskip
\paragraph{\emph{Condition $\cN$:}}
Bartnik observed that without some additional hypothesis on admissible extensions, imposed here via condition $\cN$, the value of $m_B^{(1)}(\Omega, g_-)$, when finite, would always be trivially zero \cite{Ba1}.  This is because it is possible to enclose or ``hide''  $(\Omega, g_-)$ behind a ``small neck'' (a compact minimal surface, or \emph{horizon}, of arbitrarily small area) in an admissible extension of arbitrarily small ADM mass. He imposed a ``no horizon'' condition to rule out this type of behavior. This leads to the second point of confusion in the definition of the Bartnik mass: quite a number of distinct ``no horizon'' conditions have appeared in the literature. For instance, are compact minimal surfaces disallowed in $M \cup \Omega$, or just in $M$? Are compact minimal surfaces allowed in $M$ if they do not surround $\Omega$? H. Bray proposed an alternative version of the Bartnik mass, using a different flavor of condition $\cN$ altogether that requires $\partial M$ to be outward-minimizing\footnote{Recall that $\partial M$ is (strictly) outward-minimizing in $(M,g)$ if every surface in $(M,g)$ that encloses $\partial M$ has area at least (strictly greater than) that of $\partial M$.}  in the extension $(M,g_+)$ \cite{Br}.
Unfortunately, there is not even consensus on whether the outward-minimizing condition should be strict or not. These discrepancies could potentially lead to a wide range of distinct Bartnik masses, and reconciling them is not at all straightforward (see sections \ref{sec_N} and \ref{sec_ineq}). According to Bartnik, ``The optimal form of the horizon condition remains conjectural'' \cite{Ba4}. In section \ref{sec_N}, we discuss condition $\cN$ much further (and  will only mention it sparingly until then).

\medskip

The main question that motivates us is whether $m_B^{(1)}$, $m_B^{(2)}$, $m_B^{(3)}$ are all equal on a given allowable region. This obviously depends on the precise choice of condition $\cN$. On a secondary level, we are concerned with how the various choices of $\cN$ affect the value of the Bartnik mass. Our first main result is that Type 3 admissible extensions can be smoothed to Type 1, without altering the ADM mass or the metric very much, or disturbing the geometry of $\Omega$ at all.

\begin{thm}
\label{thm_main}
Suppose $(\Omega, g_-)$ is an extendable allowable region (see Definition \ref{def_extendable}). Let $(M,g_+)$ be some Type 3 admissible extension of $(\Omega, g_-)$. Given any $\epsilon > 0$ and open neighborhood $W$ of $\partial M$ in $M$, there exists a Type 1 admissible extension $(M,\hat g)$ of $(\Omega, g_-)$ such that 
\begin{enumerate}
\item[(i)] the $C^0(M)$ norm of $\hat g - g_+$ is less than $\epsilon$,
\item[(ii)] the $C^2_{-1}(M \setminus W)$ norm of $\hat g - g_+$ is less than $\epsilon$, and
\item[(iii)] the ADM masses of $g_+$ and $\hat g$ differ by less than $\epsilon$.
\end{enumerate}
\end{thm}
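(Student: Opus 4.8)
The plan is to reduce the problem to a single corner-smoothing step at $\Sigma$, carried out in a thin collar, combined with a controlled ``absorption'' of the mean-curvature jump $H_- - H_+ \geq 0$ into positive scalar curvature on the $\Omega$-side of a nearby rescaled copy of $\Omega$. First I would recall the two standard facts that make this possible: (a) by the argument of Miao \cite{Mi2} (Section 3.1), any Type 3 extension can be replaced by a Type 2 extension with the same ADM mass, $C^0$-close metric, and the geometry of $\Omega$ untouched — one pushes the strict inequality $H_- > H_+$ into the interior as genuine $R \geq 0$ by attaching a short cylinder-like collar whose metric interpolates the two boundary second fundamental forms; and (b) for a Type 2 corner (matching induced metric, matching mean curvature, but distinct full second fundamental forms), the scalar curvature is distributionally nonnegative across $\Sigma$. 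So from the outset I may assume $(M, g_+)$ is Type 2.

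The core of the argument is then the mollification of a Type 2 metric across a mean-convex corner. Working in Gaussian normal coordinates $(t, x)$ for $\Sigma$ on a two-sided collar $(-\delta_0, \delta_0) \times \Sigma$ — with $t < 0$ in $\Omega$ and $t > 0$ in $M$ — the glued metric has the form $G = dt^2 + h_t$ where $t \mapsto h_t$ is Lipschitz at $t = 0$ with a one-sided derivative jump encoding $\Pi_- $ and $\Pi_+$; the trace of this jump is $H_- - H_+ = 0$. I would replace $h_t$ by $\tilde h_t := \eta_\lambda * h_t$, a mollification in $t$ only at scale $\lambda \ll \delta_0$, leaving $h_t$ untouched for $|t| > \lambda$ and identically equal to $h_{-\delta_0/2}$-geometry inside $\Omega$ well away from $\Sigma$. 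The resulting metric $\hat g$ is smooth everywhere, agrees with $g_+$ outside the $\lambda$-collar (so it is genuinely a Type 1 extension, with $\Omega$ and the far region both unchanged), and differs from $g_+$ in $C^0$ by $O(\lambda)$ and in $C^2_{-1}$ by nothing at all outside $W$ once $\lambda$ is small enough that the $\lambda$-collar sits inside $W$. Conditions (i) and (iii) of the theorem would follow immediately: (i) from the $C^0$ estimate, and (iii) because $\hat g = g_+$ outside a compact set, so the ADM masses are literally equal — actually giving us something stronger than (iii); similarly (ii) is immediate since $\hat g \equiv g_+$ on $M \setminus W$.

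The one genuine obstacle — and the reason a naive mollification does not suffice — is that smoothing a Lipschitz corner with $H_- = H_+$ does \emph{not} automatically preserve pointwise $R \geq 0$: the distributional scalar curvature at the corner is a nonnegative \emph{measure} supported on $\Sigma$, and mollifying smears it into an $L^1$ density that can be locally negative where the jump in $\Pi$ is ``anisotropic'' (the tangential curvature terms $R(h_t)$ and the $|\Pi_t|^2 - H_t^2$ term fluctuate even though the divergence-of-$H$ term integrates correctly). The remedy, which is where the bulk of the work lies, is a conformal correction: after mollifying I would solve $\Delta_{\hat g} u = c(x,t) u$ (or the Yamabe-type equation $L_{\hat g} u = 0$ with the conformal Laplacian) on the collar with $u = 1$ on the boundary of a slightly larger collar and with the negative part of the scalar curvature as source, using that the negative part is supported in a set of measure $O(\lambda)$ with magnitude $O(1/\lambda)$ — hence $L^{3/2}$ or $L^p$ norm $O(\lambda^{1 - 1/p})$, which tends to $0$. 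Standard elliptic estimates then give $u \to 1$ in $C^0$ as $\lambda \to 0$, so $\hat g := u^4 g_{\text{mollified}}$ is $C^0$-close to $g_+$, has $R \geq 0$ everywhere, and — crucially — can be arranged to equal the original metric outside the collar by cutting off $u$ to be exactly $1$ there (one must check the cutoff does not reintroduce negative scalar curvature, which works because away from $\Sigma$ the metric was already smooth with $R \geq 0$ and $u$ is already $C^0$-close to $1$ with small gradient there). I expect verifying that this conformal change keeps the extension Type 1 and keeps $\Omega$ isometrically embedded — i.e. that all corrections can be localized strictly inside $W \cap M$ — to require the most care, together with the bookkeeping that ties the rate $u \to 1$ to $\epsilon$. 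The hypothesis that $(\Omega, g_-)$ is \emph{extendable} (Definition \ref{def_extendable}) is used precisely to guarantee the starting Type 3 extension — and hence the whole construction — is non-vacuous.
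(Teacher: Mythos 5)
There is a genuine gap, and it is precisely the one the paper identifies as the reason Miao's original smoothing (\cite{Mi1}, Proposition 3.1) is insufficient for Theorem~\ref{thm_main}. Your mollification is taken in a \emph{two-sided} collar $(-\delta_0,\delta_0)\times\Sigma$ and alters $h_t$ for $|t|\leq\lambda$, which includes the region $-\lambda<t<0$ \emph{inside} $\Omega$. After this step, $(\Omega,g_-)$ is no longer isometrically embedded in the smoothed manifold, so whatever you produce is not a Type~1 extension of $(\Omega,g_-)$. (Shifting $\partial M$ inward to $\Sigma_{-\lambda}$ does not help either: a mollification in $t$ does not return the original smooth metric at $t=-\lambda$ or just below it unless you cut the mollifier off, and any such cutoff reintroduces the very non-smoothness you are trying to remove, now inside $\Omega$.) The subsequent conformal correction compounds the problem: to absorb the negative scalar curvature created by mollification while leaving the metric unchanged outside the collar, you would need $u\equiv 1$ there, but the equation $L_{\hat g}u=0$ with nontrivial source is overdetermined if you also prescribe $u\equiv 1$ on the collar's boundary; and if instead you cut $u$ off to $1$, the cutoff region picks up $-8\Delta u$ terms with the wrong sign. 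This is exactly why Miao's fix was a \emph{global} conformal deformation, which perturbs $\Omega$ and the ADM mass — the defect Theorem~\ref{thm_main} is designed to eliminate.

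You have also misread the role of the extendability hypothesis: it is not there to guarantee a Type~3 extension exists (that is hypothesized), but to supply the key object your proposal lacks, namely a local extension $(U,\tilde g)$ of $(\Omega,g_-)$ with strictly positive scalar curvature on $U\setminus\Omega$ (Lemma~\ref{lemma_local_extension}). The paper's argument avoids touching $\Omega$ entirely: after Lemma~\ref{lemma_wlg} (which makes $H_+<H_-$ strict and $R_+>0$ near $\partial M$, via conformal changes supported in $M$), one pushes the matching surface out to $\Sigma_t\subset M$, deforms $g_+$ slightly so that it agrees with $\tilde g$ on $\Sigma_t$ while still satisfying the strict mean-curvature inequality there, and then invokes the Brendle--Marques--Neves deformation (Theorem~\ref{thm_BMN}) to interpolate between $\tilde g$ and $g_+$ in a collar of $\Sigma_t$ \emph{inside} $M$. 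Because both metrics have scalar curvature bounded below by a positive constant on that collar (this is where positivity of $\tilde R$ on $U\setminus\Omega$ and $R_+$ near $\partial M$ are used), the $\delta$-loss in scalar curvature from Theorem~\ref{thm_BMN} can be absorbed, no post-hoc conformal correction is needed, and the result is automatically a Type~1 extension of $(\Omega,g_-)$ with $\hat g=g_+$ outside a compact subset of $W$. If you want to salvage your approach, you must replace the two-sided mollification by a one-sided interpolation that never modifies $t\leq 0$, and you will then need an argument — essentially the BMN theorem or a substitute — to control the scalar curvature loss without resorting to a conformal change that leaks past $\Sigma$.
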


$C^2_{-1}$ represents a standard weighted $C^2$ norm, defined in section \ref{sec_background}. Since $\partial \Omega$ generally will have different second fundamental forms with respect to $g_-$ and $g_+$ (even for Type 2 extensions), it is not possible to achieve a $C^1(M)$ smoothing.

\begin{remark}
Theorem \ref{thm_main} leads immediately to a new proof of the positive mass theorem with corners in dimension three \cite[Theorem 1]{Mi1}, \cite[Theorem 3.1]{ST}, (cf. \cite[Theorem 1]{McSz}), for extendable allowable regions with positive boundary mean curvature. Stated in the language of this paper, this says that the ADM mass of a Type 3 admissible extension of an extendable allowable region is nonnegative. This argument of course relies on the smooth version of the positive mass theorem \cites{SY,W}. We do not address the rigidity (zero mass) case here.
\end{remark}

The hope is that Theorem \ref{thm_main} will lead to agreement of $m_{B}^{(1)}, m_{B}^{(2)},$ and $m_{B}^{(3)}$ for a given condition $\cN$. This turns out to be quite delicate in general, and we succeed in this for two versions of condition $\cN$. First:

\begin{thm}
\label{thm_equal_som}
Suppose $(\Omega, g_-)$ is an extendable allowable region, and let $\cN$ be the condition ``$\partial M$ is strictly outward-minimizing.'' Then
$$m_B^{(1)}(\Omega, g_-)= m_B^{(2)}(\Omega, g_-)= m_B^{(3)}(\Omega, g_-).$$
\end{thm}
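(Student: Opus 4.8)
The plan is to prove the theorem by chaining together the obvious inequality $m_B^{(3)} \leq m_B^{(2)} \leq m_B^{(1)}$ with a reverse inequality obtained from Theorem \ref{thm_main}. Since Theorem \ref{thm_main} lets us replace any Type 3 admissible extension with a nearby Type 1 admissible extension whose ADM mass is arbitrarily close, the only real work is to verify that the smoothing produced there still satisfies the chosen condition $\cN$, namely ``$\partial M$ is strictly outward-minimizing.''

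First I would fix an extendable allowable region $(\Omega, g_-)$ and a Type 3 admissible extension $(M, g_+)$ satisfying condition $\cN$ (i.e.\ $\partial M$ is strictly outward-minimizing in $(M,g_+)$), and assume $m_B^{(3)}(\Omega,g_-)$ is finite — otherwise all three masses are $+\infty$ and there is nothing to prove, or one argues the infimum is still approximated. Given $\epsilon > 0$, apply Theorem \ref{thm_main} with a conveniently small neighborhood $W$ of $\partial M$ to obtain a Type 1 admissible extension $(M, \hat g)$ with $\|\hat g - g_+\|_{C^0(M)} < \epsilon$, $\|\hat g - g_+\|_{C^2_{-1}(M \setminus W)} < \epsilon$, and $|m_{ADM}(M,\hat g) - m_{ADM}(M, g_+)| < \epsilon$. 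The key step is then: show that for $\epsilon$ small (and $W$ chosen appropriately), $\partial M$ remains strictly outward-minimizing in $(M, \hat g)$. Because being strictly outward-minimizing is an ``open'' condition under small perturbations, I expect this to follow, but it must be argued carefully since the perturbation is only $C^0$-small globally and the competitor surfaces enclosing $\partial M$ range over a noncompact family extending into the asymptotically flat end.

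The main obstacle, therefore, is this stability-of-strict-outward-minimization step. The strategy I would use: a surface enclosing $\partial M$ with small $\hat g$-area must be contained in a bounded region (by the asymptotic flatness and a standard isoperimetric/barrier argument controlling how far an area-minimizing enclosure can reach into the end), so one may restrict attention to a compact region $K \supset W$. On $K$, one has not merely $C^0$ but $C^2_{-1}$ (hence $C^2$ on the compact piece $K \setminus W$) closeness of $\hat g$ to $g_+$; near $\partial M$ one uses the explicit nature of the smoothing construction (or a direct estimate) to control areas there as well. Comparing $g_+$-areas and $\hat g$-areas of such surfaces via the $C^0$ bound on the metrics gives a multiplicative $(1 + O(\epsilon))$ distortion of areas; since $\partial M$ is \emph{strictly} outward-minimizing in $g_+$, there is a definite gap, and for $\epsilon$ small this gap survives the distortion — both for $\partial M$'s own area and for competitors — so $\partial M$ is strictly outward-minimizing in $\hat g$. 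Hence $(M,\hat g)$ is a Type 1 admissible extension satisfying condition $\cN$, giving $m_B^{(1)}(\Omega, g_-) \leq m_{ADM}(M,\hat g) < m_{ADM}(M, g_+) + \epsilon$.

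Finally, taking the infimum over all Type 3 admissible extensions $(M, g_+)$ satisfying $\cN$ and letting $\epsilon \to 0$ yields $m_B^{(1)}(\Omega, g_-) \leq m_B^{(3)}(\Omega, g_-)$. Combined with the elementary chain $m_B^{(3)}(\Omega, g_-) \leq m_B^{(2)}(\Omega, g_-) \leq m_B^{(1)}(\Omega, g_-)$ (valid because every Type 1 extension is Type 2 and every Type 2 is Type 3, and the condition $\cN$ is the same in all three definitions), all three quantities coincide. The one subtlety to flag is ensuring that the smoothed extension is genuinely a \emph{Type 1} admissible extension of $(\Omega, g_-)$ — i.e.\ that it still has $R \geq 0$ and still extends $(\Omega, g_-)$ with the correct boundary identification — but this is exactly the content of Theorem \ref{thm_main}, so it may be invoked directly.
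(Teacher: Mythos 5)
Your high-level strategy is exactly the paper's: use Theorem \ref{thm_main} to smooth a near-optimal Type 3 admissible extension to a Type 1 one, verify that the smoothing preserves the strictly outward-minimizing property, and then combine with the trivial chain $m_B^{(3)} \leq m_B^{(2)} \leq m_B^{(1)}$. You also correctly identify that the only substantive step is stability of the condition $\cN$ under the smoothing. However, the way you fill that step in is wrong, and the error is exactly the point the paper flags as the main difficulty.

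Your argument rests on the claim that because $\partial M$ is \emph{strictly} outward-minimizing in $g_+$, ``there is a definite gap'' that ``survives'' a $(1+O(\epsilon))$ multiplicative area distortion coming from $\|\hat g - g_+\|_{C^0} < \epsilon$. This is false: strict outward-minimization says $|\Sigma'|_{g_+} > |\partial M|_{g_+}$ for every competitor $\Sigma' \neq \partial M$, but there is no uniform $\delta > 0$ with $|\Sigma'|_{g_+} \geq |\partial M|_{g_+} + \delta$, since competitors arbitrarily close to $\partial M$ have areas arbitrarily close to $|\partial M|_{g_+}$. Consequently a $C^0$-small perturbation can destroy outward-minimization — the paper gives an explicit counterexample in the discussion of openness in Section~5 (conformal factors $f_i^4 g$ with $f_i \to 1$ uniformly but $\partial_\nu f_i \to -\infty$, which produce negative boundary mean curvature). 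So ``strictly outward-minimizing'' is open in $C^2_{-\tau}$ but \emph{not} in $C^0_{-\tau}$, and your appeal to it being ``an open condition under small perturbations'' is not available here, where the perturbation near $\partial M$ is only $C^0$-controlled. The second, softer suggestion you make (``near $\partial M$ one uses the explicit nature of the smoothing construction'') is the right instinct, but you do not carry it out, and this is precisely where all the work lies.

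The paper's actual argument uses the specific structure of the smoothing far more heavily than Theorem \ref{thm_main}'s bare $C^0$/$C^2_{-1}$ conclusion. One re-enters the proof of Theorem \ref{thm_main} and observes that the smoothed metric $\hat g_n$ agrees \emph{exactly} with the local extension $\tilde g$ on a full neighborhood of $\partial M$ (not merely $C^0$-close), and is $C^1$-close to $g_+^{(t)}$ away from a tubular neighborhood $O$ of the interior gluing surface $\Sigma_t$; the $C^0$-only control is confined to $O$, which is at positive distance from $\partial M$. This is the setting of Lemma \ref{lemma_strict}, whose proof is a genuine geometric measure theory argument (outermost minimal area enclosures, $\gamma$-almost-minimizing currents, the monotonicity formula, Federer--Fleming compactness, and the slicing theorem) designed to rule out the least-area enclosure of $\partial M$ dipping into the uncontrolled region $O$. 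Before invoking Lemma \ref{lemma_strict} one must also show that $\Sigma$ is strictly outward-minimizing for the Lipschitz glued metric $(\tilde g, g_+^{(t)})$ — another step your proposal does not address. In short: the overall skeleton is right, but the central stability step cannot be dispatched with a $C^0$ area-comparison and ``definite gap'' heuristic; it requires the refined hypotheses and GMT machinery of Lemma \ref{lemma_strict}.
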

\noindent Second, in Theorem \ref{thm_equal_eps} we also prove an analogous result for a new modification to the outward-minimizing condition. These two results are proved in section \ref{sec_equiv}.

\medskip
\paragraph{\emph{Prior results:}}
In \cite[Proposition 3.1]{Mi1}, Miao showed that a Type 3 admissible extension may be smoothed locally in a $C^0$ sense. However, this smoothing distorts the metric in $\Omega$ near $\partial \Omega$ and is thus insufficient for the purpose of keeping $(\Omega, g_-)$ fixed. In addition, the smoothing generally introduces negative scalar curvature. It may be removed via a global conformal deformation, but such a deformation distorts the entire geometry of $\Omega$ and perturbs the ADM mass slightly. D. McFeron and G. Sz\'ekelyhidi also obtained a smoothing of a Type 3 admissible extension, using Ricci flow (see the section ``Proof of the main theorem'' in \cite{McSz}). This technique also globally distorts the metric but keeps the ADM mass fixed. (Fixing as opposed to perturbing the ADM mass is useful for proving rigidity statements.)

Miao also proved that the infimum over Type 3 admissible extensions may be restricted, without loss of generality, to the boundary condition $H_- \geq H_+ \geq H_- - \epsilon$ for any $\epsilon>0$, essentially with the choice $\cN =$ ``$\partial M$ is strictly outward-minimizing'' \cite[Proposition 3.3]{Mi2}. He also showed in \cite[Proposition 3.4]{Mi2} that if the infimum in $m_B^{(3)}(\Omega, g_-)$ is achieved, the minimizer is actually a Type 2 admissible extension, hence giving equality of $m_B^{(2)}$ and $m_B^{(3)}$ in this case. His proof assumed that $\partial \Omega$ has positive Gauss curvature. M. Anderson and J. Jauregui proved the latter result without such a hypothesis, using a different approach, for $\cN=$ ``$M$ has no compact minimal surfaces surrounding $\partial M$'' \cite[Theorem 1.1]{AJ}. 

Shortly before this paper was posted to the arXiv, S. McCormick posted \cite{Mc2} to the arXiv, which is on a very similar topic but uses different techniques. In particular, Theorem 3.3 therein corresponds with Theorem \ref{thm_main} above. His Theorem 3.3 requires a convexity condition not present in Theorem \ref{thm_main}, but he obtains a smoothing that only perturbs the metric near $\partial M$ (and thus does not change the ADM mass), provided the metric is not static near $\partial M$. Under the convexity hypothesis, McCormick's Theorem 4.1 gives equality of $m_1, m_2,$ and $m_3$ as in our Theorem \ref{thm_equal_som}; we refer the reader to \cite{Mc2} for the precise details of how the preservation of a non-degeneracy condition (i.e. condition $\cN$) is addressed.

\medskip

We also describe one other result of this paper: In section \ref{sec_ineq}, we show that the value produced by Bray's outward-minimizing version of the Bartnik mass is at least the value produced by the ``no surrounding horizons'' definition, for $m_B^{(3)}$ (Theorem \ref{thm_ineq}). This is apparently a nontrivial fact, as neither of these conditions implies the other, and our proof relies on the Riemannian Penrose inequality \cites{Br,HI} and the recent construction of C. Mantoulidis and R. Schoen \cite{MS}.

\begin{ack}
The author would like to thank M. Anderson, J. Corvino, S. McCormick, and P. Miao for interesting and useful discussions regarding the Bartnik mass. Miao in particular made the author aware of the problem of reconciling the various boundary conditions.
\end{ack}

\section{Preliminaries}
\label{sec_background}

The following definition is slightly non-standard, allowing asymptotically flat metrics to be non-smooth on a compact set.
\begin{definition}
Let $M$ be a smooth, connected, orientable 3-manifold, with compact (possibly empty) boundary. Let $g$ be a continuous Riemannian metric on $M$. Then we say $(M,g)$ is \emph{asymptotically flat} (AF) (with one end) if there exists a compact set $K \subset M$ and a diffeomorphism
$\Phi: M \setminus K \to \R^n \setminus B$, for a closed ball $B$, such that $g|_{M \setminus K}$ is smooth, and in the \emph{asymptotically flat coordinates} $(x^1, x^2, x^3)$ given by $\Phi$, we have
$$g_{ij} = \delta_{ij} + O(|x|^{-p}), \qquad \partial_kg_{ij} =  O(|x|^{-p-1}), \qquad \partial_k\partial_\ell g_{ij} = O(|x|^{-p-2}),$$
for some constant $\frac{1}{2}<p <1$, and the scalar curvature of $g$ in $M \setminus K$ is integrable. (Indices $i,j,k,\ell$ above run from $1$ to $3$, and $\partial$ denotes partial differentiation in the coordinate chart.)
\end{definition}
We regard the order of decay $p$ as above fixed throughout.

\begin{definition}
The \emph{ADM mass} \cite{adm} of an asymptotically flat manifold $(M,g)$ is the real number defined in asymptotically flat coordinates by:
$$m_{ADM}(M,g) = \frac{1}{16\pi} \lim_{r \to \infty} \int_{S_r} \sum_{i,j=1}^3\left( \partial_ig_{ij} - \partial_j g_{ii}\right)\frac{x^j}{r} dA ,$$
where $dA$ is the induced volume form on the coordinate sphere $S_r=\{|x|=r\}$ with respect to $\delta_{ij}$. (It was proved by Bartnik  \cite{Ba0} and Chru\'sciel \cite{Ch} that the ADM mass is well-defined.)
\end{definition}

Let $(M,g)$ be a smooth AF manifold. For an integer $k \geq 0$ and real number $\tau>0$, let $C^k_{-\tau}(M)$ denote the class of $C^k$ functions $f$ on $M$ for which the weighted $C^k$ norm
$$\|f\|_{C^k_{-\tau}(M)} = \sum_{0\leq |\gamma| \leq k} \sup_{x \in M} \sigma(x)^{|\gamma|+\tau} |D^\gamma f(x)|_{g_0}$$
is finite in a fixed AF coordinate chart, where the derivatives are taken with respect to the Levi-Civita  connection of $g_0$. 
Here, $\sigma \geq 1$ is a smooth function on $M$ agreeing with $|x|$ in an asymptotically flat coordinate chart. Thus,  functions in $C^k_{-\tau}(M)$ decay as $O(r^{-\tau})$, with successively faster decay up through the $k$th derivatives.

Abusing notation slightly we also use $\| \cdot \|_{C^k(M)}$ and $\| \cdot \|_{C^k_{-\tau}(M)}$ for tensor norms.

\section{Local extensions of positive scalar curvature.}
\label{sec_local}

\begin{definition}
\label{def_extendable}
An allowable region $(\Omega, g_-)$ is \emph{extendable} if there exists a connected Riemannian 3-manifold $(U, \tilde g)$ of nonnegative scalar curvature (a \emph{local extension}) and an isometric embedding of $(\Omega, g_-)$ onto a compact set in the interior of $(U, \tilde g)$.
\end{definition}

Clearly extendability is necessary for the existence of a Type 1 admissible extension. Note that there exist allowable regions that are not extendable. Counterexamples can be arranged (in rotational symmetry, for instance) for which $(\Omega, g_-)$ has strictly positive scalar curvature $R_-$ in the interior of $\Omega$, but with $R_-$ vanishing on $\partial \Omega$, with $|\nabla R_-| \neq 0$ on $\partial \Omega$. 

The following lemma shows that if $(\Omega, g_-)$ is extendable, then it admits a local extension in which the scalar curvature instantly becomes positive outside $\Omega$.

\begin{lemma}
\label{lemma_local_extension}
An extendable allowable region $(\Omega, g_-)$ admits a local extension $(U, \tilde g)$ such that the scalar curvature of $\tilde g$ is strictly positive on $U \setminus \Omega$ (upon identifying $\Omega$ with its compact embedded image in $U$).
\end{lemma}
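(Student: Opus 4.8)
The plan is to start from an arbitrary local extension $(U,\tilde g)$ of $(\Omega,g_-)$, whose existence is guaranteed by extendability, and to deform $\tilde g$ in the region $U \setminus \Omega$ so as to force the scalar curvature strictly positive there, while leaving $\tilde g$ untouched on $\Omega$ (so that the embedding of $(\Omega,g_-)$ is preserved). Shrinking $U$ if necessary, we may assume $U$ is a slightly larger open neighborhood of $\Omega$; since $\partial\Omega$ is smooth and has positive mean curvature $H_- > 0$, we may also assume $U \setminus \interior(\Omega)$ is foliated by the level sets of the signed distance function $t$ to $\partial\Omega$, for $t \in [0,\delta)$. The key local tool is the fact that a conformal perturbation $\tilde g_u = u^4 \tilde g$ (in dimension $3$) has scalar curvature $R_{\tilde g_u} = u^{-5}(-8\Delta_{\tilde g} u + R_{\tilde g} u)$, so that making $u$ slightly subharmonic on $U \setminus \Omega$ — while keeping $u \equiv 1$ on a neighborhood of $\Omega$ — produces $R_{\tilde g_u} > 0$ there, at least where $R_{\tilde g} \geq 0$ already holds.

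The concrete construction I would carry out: choose a smooth cutoff $\chi(t)$ that vanishes for $t$ near $0$ and equals $1$ for $t \geq \delta/2$, and set $u = 1 + s\, \phi$, where $\phi$ is a fixed smooth function on $U$, supported in $U \setminus \Omega$, with $-\Delta_{\tilde g}\phi \geq c > 0$ on the compact region where we need strict positivity (for instance one can take $\phi$ built from $\chi(t)$ with a definite negative second $t$-derivative, or simply solve a Dirichlet problem for $-\Delta_{\tilde g}\phi = 1$ on an intermediate collar with $\phi = 0$ on the inner boundary and then cut off), and $s > 0$ small. Then for $s$ small, $u > 0$ everywhere, $u \equiv 1$ near $\Omega$, and on $U \setminus \Omega$
$$
R_{\tilde g_u} = u^{-5}\big(-8s\,\Delta_{\tilde g}\phi + R_{\tilde g}(1 + s\phi)\big) \geq u^{-5}\big(8 s c - O(s)\big) > 0
$$
once $s$ is chosen small enough (here I use $R_{\tilde g} \geq 0$, so the term $R_{\tilde g}\cdot 1$ only helps, and the $O(s)$ absorbs $s R_{\tilde g}\phi$). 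Finally I would restrict to $\Omega$ together with a slightly smaller closed collar to get a genuine local extension, noting that $(\Omega,g_-)$ still embeds isometrically since $\tilde g_u = \tilde g$ there, and that the deformed metric has $R \geq 0$ globally and $R > 0$ off $\Omega$.

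The main subtlety — the step I expect to require the most care — is matching the conformal factor smoothly across $\partial\Omega$ while still obtaining \emph{strict} positivity arbitrarily close to $\partial\Omega$ from the outside: since $u \equiv 1$ on a full neighborhood of $\Omega$, one cannot have $-\Delta u > 0$ in that neighborhood, so strict positivity of $R_{\tilde g_u}$ can only be claimed on $U \setminus U'$ for some neighborhood $U' \supset \Omega$, not literally on all of $U \setminus \Omega$. This is harmless — one simply renames $U'$ as the new $\Omega$-neighborhood and takes the local extension to be (a collar beyond) $\overline{U'}$ — but it means the construction should be phrased as: produce a local extension whose scalar curvature is positive outside a prescribed neighborhood of $\Omega$, then relabel. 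An alternative that avoids the relabeling is to note that $R_{\tilde g}$, being $\geq 0$ and not requiring $u$ to do all the work, may already be positive immediately outside $\Omega$ in a good choice of initial local extension; but the conformal argument above is cleaner and fully general, so that is the route I would take.
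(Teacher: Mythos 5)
Your proposal has a genuine gap, and the subtlety you flag at the end is precisely where it breaks. With $u \equiv 1$ on an open neighborhood $U'$ of $\overline\Omega$, you get $R_{\tilde g_u} = R_{\tilde g} \geq 0$ on $U' \setminus \Omega$, which is only nonnegative — not strictly positive — on the full collar $U \setminus \Omega$. Your proposed ``relabeling'' does not repair this: the lemma is about the fixed region $(\Omega, g_-)$, and its use in the proof of Theorem \ref{thm_main} sends $t \searrow 0$ (to preserve the strict mean curvature gap $H_+^{(t)} < \tilde H_t$), so the scalar curvature $\tilde R$ must be strictly positive on surfaces $\Sigma_t$ arbitrarily close to $\partial\Omega$. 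Positivity only outside a fixed $U' \supsetneq \Omega$ is not good enough, and one cannot replace $\Omega$ by $\overline{U'}$ without changing what is being extended. Also note that you cannot in general arrange the initial local extension to already have $R > 0$ immediately outside $\Omega$ (your ``alternative'' in the last paragraph): the scalar curvature of a local extension can vanish identically, e.g. if $\Omega$ is a domain in flat $\R^3$.

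The fix is to drop the assumption that $u \equiv 1$ on a \emph{neighborhood} of $\Omega$, and instead take $u \equiv 1$ exactly on $\Omega$ with $u - 1$ vanishing to infinite order on $\partial\Omega$ (so that $u$ is still smooth), while $\Delta_{\tilde g} u < 0$ for $0 < t < t_0$. Using the normal-coordinate form $\Delta u = u''(t) + H(t,x)\,u'(t)$ for $u = u(t)$ and the hypothesis $H_- > 0$ (so $H(t,\cdot) \geq h_0 > 0$ for small $t$), a choice like $u(t) = 1 - e^{-1/t}$ for $t > 0$ has $u' < 0$ and $u'' < 0$ for $t$ small, giving $\Delta u < 0$ and hence $R_{\tilde g_u} = u^{-5}(-8\Delta u + R_{\tilde g} u) > 0$ on $U \setminus \Omega$. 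The paper does something close in spirit but non-conformal: it keeps the foliation $\gamma_t$ and replaces $dt^2$ by $\rho(t)^2 dt^2$ with $\rho(t) = \exp\bigl(h_0^{-1}\int_0^t e^{-1/\tau}\,d\tau\bigr)$; the second-variation scalar curvature formula then produces a positive term $2(\rho'/\rho) H$ that dominates for small $t$. Either route works; both crucially exploit (a) a conformal/lapse factor flat to infinite order on $\partial\Omega$ and (b) the strict positivity of the boundary mean curvature. Your version as written has neither of these ingredients doing the needed work near $\partial\Omega$.
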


\begin{proof}
In some local extension $(U, g)$ of $(\Omega, g_-)$ with $\Omega \subset\subset U$,
consider a tubular neighborhood of $\partial \Omega$ in which the metric $g$ takes the form
\begin{equation}
\label{eqn_g_form}
g = dt^2 + \gamma_t,
\end{equation}
for $t \in (-t_0,t_0)$, for some $t_0 >0$. Here $\gamma_t$ is the induced metric on the surface $\Sigma_t$ of signed distance $t$ from $\partial \Omega$ (where $t < 0$ in the interior of $\Omega$), with respect to $g$. Use local coordinates $(t,x)$ in which $x$ is a coordinate on $\partial \Omega$.

Shrink $U$ if necessary to arrange that 
\begin{equation}
\label{eqn_U}
U \setminus \Omega = \bigcup_{t \in (0, t_0)} \Sigma_t.
\end{equation}
 We will modify $g$ on $U \setminus \Omega$ so that it has positive scalar curvature there. Whenever we shrink $t_0$ below, we implicitly shrink $U$ so that \eqref{eqn_U} is satisfied. 

We will construct a smooth, positive function $\rho$ on $U$ that is identically 1 on $\Omega$, with $\rho=\rho(t)$ on $U \setminus \Omega$, and define
\begin{equation}
\label{eqn_tilde_g}
\tilde g = \rho(t)^2 dt^2 + \gamma_t,
\end{equation}
for $t \in (-t_0,t_0)$. Note $\tilde g$ extends to a smooth Riemannian metric on $U$ that agrees with $g_-$ on $\Omega$.

For the metric $g$ written in the form \eqref{eqn_g_form}, the second-variation-of-area formula (see equation (3) in \cite{Mi1} for example) gives the relation
$$R(t,x) = 2K(t,x) - H(t,x)^2 - \|A(t,x)\|^2-2\frac{\partial H(t,x)}{\partial t},$$
where $K(t,x)$, $H(t,x)$, and $A(t,x)$ are, respectively, the Gauss curvature, mean curvature, and second fundamental form of $\Sigma_t$ with respect to $g$, and $R(t,x)$ is the scalar curvature of $g$. Applying this formula to  the metric $\tilde g$ written in the form \eqref{eqn_tilde_g} leads to the following formula for the scalar curvature $\tilde R$ of $\tilde g$ on $U \setminus \Omega$:
\begin{equation}
\label{R_tilde}
\tilde R(t,x) = \frac{1}{\rho(t)^2} \left(R(t,x) + 2 K(t,x) \left(\rho(t)^2-1\right) + \frac{2\rho'(t)}{\rho(t)} H(t,x)\right).
\end{equation}

To define $\rho$, proceed as follows. Since $H(0,x) = H_-(x) >0$ (by the definition of allowable region), there exists a constant $h_0>0$ such that for $t \in [0,t_0)$ so that $H(t,x)\geq h_0$, shrinking $t_0$ if necessary. Now, define $r(t) = e^{-1/t}$ for $t \in (0, t_0)$; $r(t)$ extends smoothly to $0$ at $t=0$. In particular, $r$ is nonnegative and strictly increasing on $[0,t_0)$.
Finally, let $\kappa_0> 0$ be a constant so that $|K(t,x)| \leq \kappa_0$ (shrinking $t_0$ if necessary) for all $t \in [0,t_0)$ and $x$.
Define, for $0 \leq t < t_0$,
$$\rho(t) = \exp\left(\frac{1}{h_0} \int_0^t r(\tau) d\tau \right).$$

Clearly $\rho(0)=1$ and its derivatives vanish to infinite order at $t=0$, since $r$ and all of its  derivatives  vanish at $t=0$. In particular, $\rho$ extends smoothly by 1 to all of $\Omega$. Also, note that $\rho'(t)>0$ for $t>0$ and consequently $\rho\geq 1$ on $U$. Thus, $\tilde g$ defined in \eqref{eqn_tilde_g}, with this choice of $\rho$, is a smooth Riemannian metric on $U$. We will use \eqref{R_tilde} to estimate its scalar curvature. First we estimate
\begin{align*}
\rho(t)^2-1 &= \exp\left(\frac{2}{h_0} \int_0^t r(\tau) d\tau \right)-1\\
&\leq \exp\left(\frac{2}{h_0} tr(t) \right)-1\\
&\leq \frac{4}{h_0} tr(t)
\end{align*}
for $t \in [0, t_0)$, shrinking $t_0>0$ if necessary. 
Then for $t \in [0,t_0)$,
\begin{align*}
\left|2 K(t,x) \left(\rho(t)^2-1\right)\right| \leq  \frac{8\kappa_0}{h_0} tr(t).
\end{align*}
Now using \eqref{R_tilde} and the fact that $R\geq 0$, we have that for $t \in [0,t_0)$,
\begin{align*}
\tilde R(t,x) &\geq \frac{1}{\rho(t)^2} \left( - \frac{8\kappa_0}{h_0} tr(t) +\frac{2r(t)}{h_0} H(t,x)\right)\\
&\geq \frac{r(t)}{\rho(t)^2} \left(-\frac{8\kappa_0}{h_0} t  +2\right),
\end{align*}
which is strictly positive for $t \in (0,t_0)$, again shrinking $t_0>0$ if necessary. Thus, $(U, \tilde g)$ is a local extension of $(\Omega, g_-)$ for which the scalar curvature in $U \setminus \Omega$ is strictly positive.
\end{proof}

\section{Proof of Theorem \ref{thm_main}}

We outline the proof of Theorem \ref{thm_main} as follows. First, via a perturbation in the weighted space $C^k_{-1}(M)$, we show that without loss of generality, a Type 3 admissible extension may be assumed to obey the strict inequality $H_- > H_+$, and in addition have positive scalar curvature near $\partial M$ (Lemma \ref{lemma_wlg}). Second, we use the local  extension $\tilde g$ in which the scalar curvature instantly becomes positive outside $\Omega$ constructed in Lemma \ref{lemma_local_extension}. The strategy then is to consider a surface $\Sigma_t$ pushed out into $M$ slightly from $\partial \Omega$, which now has a neighborhood on which the scalar curvatures in both the extension $(M,g_+)$ and the local extension are bounded below by a positive constant. A small local perturbation of $g_+$ changes this metric to agree with $\tilde g$ on $\Sigma_t$, while still respecting the strict mean curvature inequality. Finally, a scalar curvature deformation result of S. Brendle, F. Marques, and A. Neves \cite{BMN} is invoked to smoothly match the metrics.

\subsection{A simplification}
First we show that a Type 3 admissible extension can be perturbed slightly so that it has positive scalar curvature near the boundary and satisfies the mean curvature inequality strictly. The technique to create a jump in mean curvature was used by Miao in \cite{Mi2}.

\begin{lemma}
\label{lemma_wlg}
Let $(M,g)$ be a smooth AF 3-manifold with nonnegative scalar curvature and with nonempty compact boundary $\partial M$. Given any $\epsilon > 0$ and integer $k \geq 0$, there exists a smooth asymptotically flat metric $g'$ on $M$ such that
\begin{enumerate}
\item[(i)] $\|g'-g\|_{C^k_{-1}(M)} < \epsilon$, 
\item[(ii)] $g'$ has nonnegative scalar curvature that is strictly positive on a neighborhood of $\partial M$,
\item[(iii)] $g=g'$ on $\partial M$,
\item[(iv)] the mean curvatures of $\partial M$ with respect to $g$ and $g'$, say $H$ and $H'$ respectively, satisfy $H' < H$ pointwise, and
\item[(v)] the ADM masses of $g$ and $g'$ differ by less than $\epsilon$.
\end{enumerate}
\end{lemma}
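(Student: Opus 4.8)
The plan is to perturb $g$ by adding a compactly supported symmetric $2$-tensor localized in a collar neighborhood of $\partial M$, chosen so as to (a) decrease the mean curvature of $\partial M$ pointwise, (b) keep the induced metric on $\partial M$ unchanged, and then (c) fix up the scalar curvature. First I would introduce Gaussian normal coordinates $(t,x)$ near $\partial M$, writing $g = dt^2 + \gamma_t$ for $t \in [0,t_0)$, so that $\partial M = \{t=0\}$ and $t$ increases into $M$. The mean curvature of $\Sigma_t = \{t = \text{const}\}$ with respect to $g$ is $H(t,x) = \tfrac{1}{2}\gamma_t^{ab}\,\partial_t (\gamma_t)_{ab}$. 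Following Miao's technique in \cite{Mi2}, I would replace $dt^2$ by $f(t)^2\, dt^2$ for a smooth positive function $f$ with $f \equiv 1$ for $t \geq t_0/2$ (so the metric is unchanged away from the collar and remains asymptotically flat), $f(0) = 1$ (so $g$ and $g'$ agree on $\partial M$ and induce the same metric there), but with $f'(0) > 0$. A short computation — analogous to \eqref{R_tilde} — shows that the new mean curvature of $\partial M$ is $H' = H/f(0) = H$ to zeroth order, so to actually produce a \emph{strict} drop $H' < H$ one should instead keep the collar direction fixed and modify $\gamma_t$ itself near $t=0$: replace $\gamma_t$ by $\gamma_t - \phi(t)\,\gamma_0$ for a smooth $\phi \geq 0$ with $\phi(0) = 0$, $\phi'(0) > 0$, $\phi$ supported in $[0,t_0/2)$ and with $\phi$ and all derivatives as small as we like in $C^k$. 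Then $H'(0,x) = H(0,x) - \tfrac{1}{2}\phi'(0)\,(\text{tr}_{\gamma_0}\gamma_0) + (\text{lower order in }\phi) = H(0,x) - \tfrac{3}{2}\phi'(0) + O(\phi)$, which is strictly less than $H$ pointwise once $\phi'(0) > 0$ is fixed and the remaining freedom in $\phi$ is used to make it a genuine perturbation; the induced metric on $\partial M$ is still $\gamma_0$, giving (iii).

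The key difficulty is item (ii): an arbitrary small perturbation of $g$ will in general destroy the nonnegativity of scalar curvature, and worse, we must \emph{gain} strict positivity near $\partial M$ while $R$ was only assumed $\geq 0$. I would handle this in two stages. Stage one: first apply a preliminary deformation to make $R_g$ strictly positive on a neighborhood of $\partial M$ \emph{before} introducing the mean-curvature jump. For this I would borrow exactly the construction of Lemma \ref{lemma_local_extension}: conformally rescale the collar direction, $g \mapsto \rho(t)^2\,dt^2 + \gamma_t$ with $\rho(0) = 1$, $\rho'(0^+) > 0$ vanishing to infinite order at $t=0$, and $\rho \geq 1$; formula \eqref{R_tilde} applied here (with $\partial M$ playing the role of $\partial\Omega$, using $H_{\partial M} > 0$? — no: here $H$ need not be positive) shows the new scalar curvature is $\tilde R = \rho^{-2}(R + 2K(\rho^2-1) + 2\tfrac{\rho'}{\rho}H)$. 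Since $H$ may be negative or zero on $\partial M$, I cannot rely on the $H$-term; instead I would choose $\rho$ so that $\rho' > 0$ is large enough on a tiny interval that the good term dominates where $H > 0$, and separately use the freedom in $\rho''$ — equivalently, deform by $\rho(t)^2\,dt^2$ with a prescribed convexity — to absorb the regions where $H \leq 0$. Concretely, the cleanest route is the well-known fact (used implicitly throughout this literature, e.g.\ \cite{Mi2}, \cite{BMN}) that one can always deform a metric with $R \geq 0$ to one with $R > 0$ in a prescribed open set by a $C^k_{-1}$-small perturbation, unless the metric is scalar-flat and Ricci-flat there, in which case it is flat and a direct construction applies. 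Stage two: \emph{after} $R_g > 0$ near $\partial M$, the mean-curvature jump perturbation of the previous paragraph changes $R$ by an amount controlled by $\|\phi\|_{C^2}$, which can be made smaller than $\tfrac{1}{2}\min R_g$ on the collar; hence $R_{g'} > 0$ persists there, and outside the collar $g' = g$ so $R_{g'} = R_g \geq 0$. This gives (ii) and (iv) simultaneously.

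It remains to arrange (i) and (v). Both perturbations above are supported in a fixed compact collar of $\partial M$ and involve functions ($\rho - 1$, $\phi$) that, together with their first $k$ derivatives, can be taken uniformly as small as we wish; since the perturbation vanishes identically near infinity, the weighted norm $\|g' - g\|_{C^k_{-1}(M)}$ is bounded by a constant (depending only on the fixed collar and on $\sigma$) times the ordinary $C^k$ norm of the perturbation on the collar, hence $< \epsilon$, proving (i). For (v), the ADM mass is computed from the metric's behavior at infinity, and $g' = g$ outside a compact set, so in fact $m_{ADM}(M,g') = m_{ADM}(M,g)$ exactly — better than the claimed inequality. (If one preferred to quote a general stability statement instead, $|m_{ADM}(g') - m_{ADM}(g)| \leq C\|g' - g\|_{C^1_{-1}} < \epsilon$ by the standard continuity of the ADM mass in weighted norms, but here equality is immediate.) The main obstacle, as noted, is Stage one of (ii): squeezing strict positivity of scalar curvature out of the collar when the boundary mean curvature has no sign; the resolution is to exploit that the deformation may be chosen with arbitrarily large \emph{second} $t$-derivative of the profile while keeping $C^0$-size small, so that the $-2\,\partial_t H$-type term in the second-variation formula is made favorable, and to fall back on the flat rigidity case only when the Ricci curvature vanishes identically on the collar.
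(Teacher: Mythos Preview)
Your approach differs substantially from the paper's, and Stage~one contains a genuine gap.

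The paper does \emph{not} work in a collar with direct metric surgery; instead it uses two global conformal deformations. First it takes the harmonic function $\varphi$ on $(M,g)$ with $\varphi=1$ on $\partial M$ and $\varphi\to 0$ at infinity, sets $u_a=(1-a)\varphi+a$, and considers (a pullback of) $u_a^4 g$. Because $u_a$ is harmonic, $R\ge 0$ is preserved exactly; because $u_a=1$ on $\partial M$ the boundary metric is unchanged; and the Hopf boundary lemma gives $\nu(u_a)<0$, hence $H'=H+4\nu(u_a)<H$ \emph{regardless of the sign of $H$}. This already secures (iii) and (iv), with (i) and (v) following as $a\nearrow 1$. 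Second, to obtain strict positivity of $R$ near $\partial M$, the paper solves $\Delta w=-b\psi$ with $\psi\ge 0$ compactly supported and positive near $\partial M$, $w=1$ on $\partial M$, $w\to 1$ at infinity; the conformal metric $w^4 g'$ then has $R''=w^{-5}(8b\psi+R'w)$, which is nonnegative everywhere and strictly positive where $\psi>0$. As $b\searrow 0$ this second change is $C^k_{-1}$-small and perturbs $H'$ only slightly, so the strict inequality $H''<H$ survives.

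Your Stage~one is where the argument breaks. You correctly observe that the construction of Lemma~\ref{lemma_local_extension} needs $H>0$, which is not hypothesized here. Your proposed remedies do not close the gap: the ``well-known fact'' that one can deform $R\ge 0$ to $R>0$ in a prescribed open set by a $C^k$-small perturbation, \emph{while keeping the metric fixed on a hypersurface that meets the set}, is not a standard statement and is not what the references you cite provide. And the suggestion to take the profile with ``arbitrarily large second $t$-derivative while keeping $C^0$-size small'' directly conflicts with requirement~(i), which demands $C^k_{-1}$-smallness for arbitrary $k$; a large $\partial_t^2$ of the profile means a large $C^2$ norm of $g'-g$. (Incidentally, in your Stage~two computation $\operatorname{tr}_{\gamma_0}\gamma_0=2$, not $3$, since $\partial M$ is $2$-dimensional.)

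The conformal route is what makes the lemma clean: harmonicity of the conformal factor gives the mean-curvature drop and scalar-curvature preservation in one stroke via Hopf, and a Poisson equation with nonnegative right-hand side then injects strict positivity of $R$ without any sign assumption on $H$. Your collar approach could in principle be made to work, but not with the tools you invoke for Stage~one.
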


\noindent Thus, if $(M,g)$ is a Type 3 admissible extension of some allowable region $(\Omega, g_-)$, then so is $(M, g')$.

\begin{proof}
Let $(M,g)$ be given as above. Let $\varphi$ be the unique solution to
$$\begin{cases}
\Delta \varphi = 0 & \text{ on } (M,g)\\
\varphi \to 0 & \text{ at infinity }\\
\varphi =1 & \text{ on } \partial M,
\end{cases}$$
which is positive by the maximum principle. (The existence of $\varphi$ is standard, and can be derived from, for example, the same techniques as in the proof of \cite[Lemma 3.2]{SY}.) Moreover, by standard weighted elliptic estimates (see \cite[Theorem 1]{Meyers}, for example), we have $\varphi$ is smooth and $\varphi \in C^k_{-1}(M)$. For a parameter $a \in (0,1)$ to be determined, let
$$u_a = (1-a)\varphi + a,$$
which is clearly positive and solves
$$\begin{cases}
\Delta u_a = 0 & \text{ on } (M,g)\\
u_a \to a & \text{ at infinity}\\
u =1 & \text{ on } \partial M.
\end{cases}$$
Note $u_a$ converges to 1 smoothly on compact sets as $a \nearrow 1$.  Let $\{\Phi_a\}_{\{\frac{1}{2} \leq a\leq 1\}}$ be a smooth family of diffeomorphisms $M \to M$, equal to the identity near $\partial M$, with $\Phi_a(x) = a^{-2}x$ in some AF coordinate chart, and $\Phi_1 = \Id_M$. 

Consider the metric $g'_a =  \Phi_a^*(u_a^4 g)$. (We use the diffeomorphisms here 
to account for the conformal factors not approaching 1 at infinity.) Note, $g_a'$ is AF with 
nonnegative scalar curvature, since $u_a$ is harmonic with respect 
to $g$. It is straightforward to check that $g'_a$ converges to $g$ 
in $C^k_{-1}(M)$ as $a \nearrow 1$. Also, $g_a'=g$ on $\partial M$ and induces boundary mean curvature 
\begin{equation}
\label{eqn_mc_conf}
H_a' = H + 4\nu(u_a),
\end{equation}
where $\nu$ is the $g$-unit normal to $\partial M$ pointing into $M$. By the Hopf maximum principle, $\nu(u_a)<0$, so that $H_a' < H$ on $\partial M$.  Also, the ADM mass of $g'_a$ converges to that of $g$ as $a \nearrow 1$. This can be seen from a well-known formula that follows directly from the definition of the ADM mass:
\begin{align}
m_{ADM}(g_a') &= m_{ADM}(u_a^4g) = a^2 \left(m_{ADM}(g) - \frac{1}{2\pi a} \lim_{r \to \infty} \int_{S_r} \partial_{\nu} u_a dA\right) \nonumber\\
&=a^2 \left( m_{ADM}(g) - \frac{1-a}{2\pi a} \lim_{r \to \infty} \int_{S_r} \partial_{\nu} \varphi dA\right). \label{eqn_adm_change}
\end{align}
Thus for $a$ sufficiently close to 1, $g'_a$ satisfies properties (i), (iii), (iv), (v), and has nonnegative scalar curvature. Fix such a value of $a$, and let $g'=g'_a$, with scalar curvature $R'\geq 0$.

To achieve (ii), we perform another conformal deformation. Let $\psi \geq 0$ be a smooth, compactly supported  function on $M$ that is strictly positive on a neighborhood of $\partial M$. For a parameter $b>0$ to be determined, consider Poisson's equation
$$\begin{cases}
\Delta w = -b \psi & \text{ on } (M,g')\\
w \to 1 & \text{ at infinity}\\
w =1 & \text{ on } \partial M.
\end{cases}$$
Again by standard arguments in elliptic theory, there exists a unique solution $w=w_b$, which is smooth and positive, and such that $w_b$ converges to 1 in $C^{k}_{-1}(M)$ as $b \searrow 0$.  Consider the conformal metric $g_b'' = w_b^4 g'$. For $b>0$ sufficiently small, this metric satisfies (i) and (iii)--(v). To see (ii), its scalar curvature is given by
$$R_b'' = w_b^{-5} (-8\Delta w_b + R'w_b),$$
which is nonnegative on $M$ and positive near $\partial M$.
\end{proof}

\subsection{The proof of Theorem \ref{thm_main}}
Let $(M,g_+)$ be a Type 3 admissible extension of an extendable allowable region $(\Omega,g_-)$. Without loss of generality, by Lemma \ref{lemma_wlg}, we may assume $H_+ < H_-$ on $\Sigma := \partial M \cong \partial \Omega$ and that the scalar curvature $R_+$ of $g_+$ is strictly positive on a neighborhood of $\partial M$ in $M$. 

Take a local extension $(U, \tilde g)$ of $(\Omega, g_-)$ as in Lemma \ref{lemma_local_extension}. View $U$ as a precompact subset of $M \cup \Omega$ that contains $\Omega$, and extend $\tilde g$ arbitrarily to a smooth Riemannian metric on $M \cup \Omega$, also called $\tilde g$. Note that $\tilde g$ need not have nonnegative scalar curvature on $M$, but it does have nonnegative scalar curvature $\tilde R$ on $U$ that is positive on $U \setminus \Omega$. See Figure \ref{fig_M_g_tilde}.

\begin{figure}[ht]
\begin{center}
\includegraphics[scale=0.75]{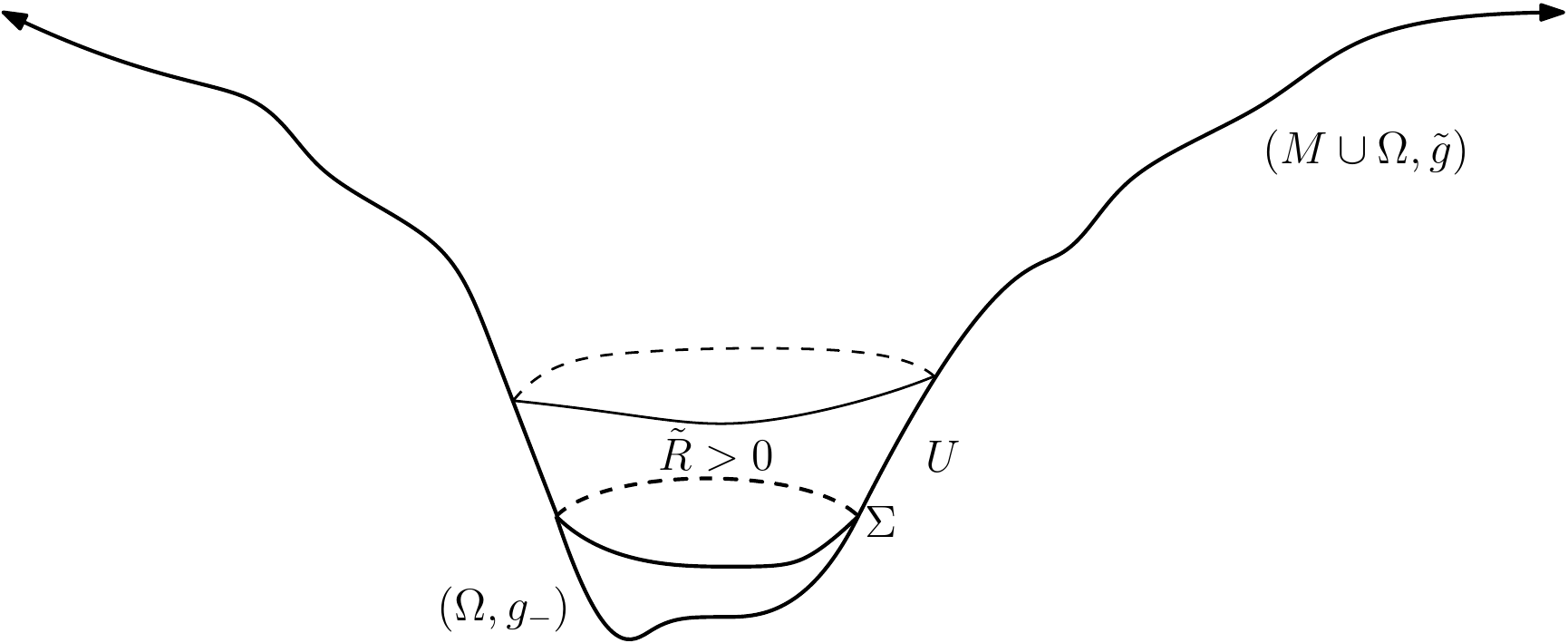}
\end{center}
\caption{\small The manifold $(M \cup \Omega, \tilde g)$ is depicted. It has mean curvature $H_-$ on $\Sigma$, and positive scalar curvature in $U \setminus \Omega$ (possibly approaching zero at $\partial \Omega$).
\label{fig_M_g_tilde} }
\end{figure}

For $t> 0$ small, let $\Sigma_t \subset M$ be the smooth distance-$t$ surface to $\Sigma$ with respect to $g_+$. 
 Let $M_t$ be the region outside of (and including) $\Sigma_t$, a smooth manifold with boundary $\Sigma_t$. Shrink $U$ if necessary so that $R_+$ is strictly positive on $\overline{U \setminus \Omega}$, so that $R_+ \geq 2\alpha $ on $\overline{U \setminus \Omega}$ for some constant $\alpha> 0$.
Shrink $t>0$ if necessary to ensure $\Sigma_{2t}$ is smooth and lies in $U$. See Figure \ref{fig_M_g_plus}.
\begin{figure}[ht]
\begin{center}
\includegraphics[scale=0.75]{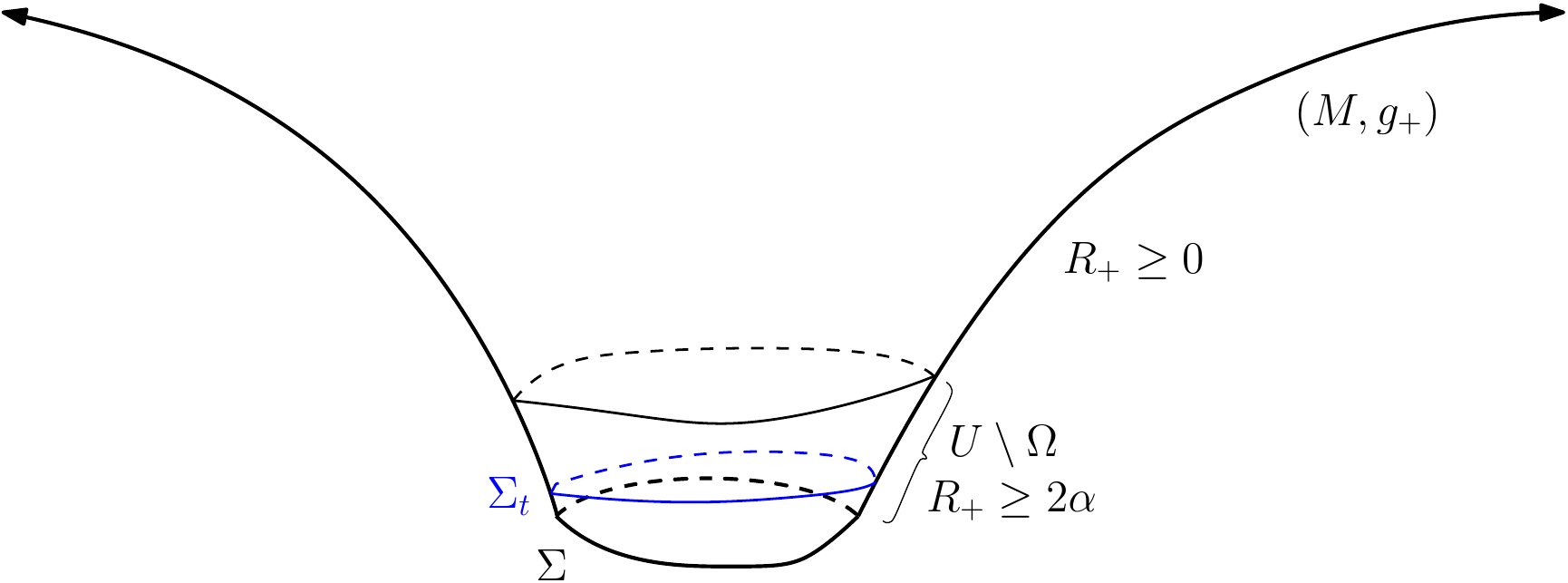}
\end{center}
\caption{\small The manifold $(M,g_+)$ is depicted. Its boundary $\Sigma$ has mean curvature $H_+$. The scalar curvature is nonnegative everywhere and bounded below by $2\alpha$ in $\overline{U \setminus \Omega}$. $\Sigma_t$ is the distance-$t$ surface from $\Sigma$.
\label{fig_M_g_plus} }
\end{figure} 

By the definition of extension, $g_+$ glues continuously to $g_-$ across $\Sigma$; in particular, $g_+ = \tilde g$ on $\Sigma_0=\Sigma$. Now, we modify $g_+$ slightly on $M_t$ so that it agrees with $\tilde g$ on the surface $\Sigma_t$ instead of on $\Sigma_0$. Let $\eta = g_+ - \tilde g$, a tensor on $M$ vanishing on $\partial M$. Let $\Phi_t: M \to M_t$ be a smooth family of diffeomorphisms, for $t$ small, such that $\Phi_0: M \to M$ is the identity and $\Phi_t$ is the identity outside of $\Sigma_{2t}$. Obviously $\Phi_t$ maps $\Sigma$ to $\Sigma_t$. On $M_t$, define the tensor
$$g_+^{(t)} = \tilde g|_{M_t} + \left(\Phi_t^{-1}\right)^* \eta.$$
Note that $g_+^{(t)}$ equals $g_+$ outside $\Sigma_{2t}$, equals $\tilde g$ on $\Sigma_t$, and converges smoothly to $g_+$ as $t \searrow 0$ (in the sense  that $\Phi_t^* g_+^{(t)} \to g_+$ smoothly on $M$ as $t \searrow 0$). 

In particular, for all $t>0$ sufficiently small, $g_+^{(t)}$ is a Riemannian metric on $M_t$ with nonnegative scalar curvature that is bounded below by $\alpha$ in $\overline{U} \cap M_t$, with ADM mass equal to that of $g_+$, with $g_+ = \tilde g$ on $\Sigma_t$.

Let $H_t^+$ be the mean curvature of $\Sigma_t$ with respect to $g_+^{(t)}$, and let $\tilde H_t$ be the mean curvature of $\Sigma_t$ with respect to $\tilde g$. As $t \searrow 0 $, we have $H_t^+ \circ \Phi_t|_{\Sigma}$ and $\tilde H_t \circ \Phi_t|_{\Sigma}$ converging uniformly to $H_+$ and $H_-, $ respectively. Thus, for $t>0$ sufficiently small, since $H_+ < H_-$, we have $H_t^+ < \tilde H_t$ on $\Sigma_t$. Now fix such a value $t$, and let $g_{++} = g_+^{(t)}$.

We now invoke a scalar curvature deformation result of Brendle--Marques--Neves \cite{BMN}, explained further in the appendix herein,  applied to the metrics $g_{++}$ and $\tilde g$ on $M_t$. This theorem applies because  $g_{++} = \tilde g$ on $\partial M_t$ and $- H_t^+ > - \tilde H_t$ (the negative here indicates mean curvature pointing out of $M_t$). Let $W \subset U \cap M_t$ be a neighborhood of $\partial M_t$ in $M_t$ on which the scalar curvatures $R_{++}$ and $\tilde R$ of $g_{++}$ and  $\tilde g$ are both bounded below by some constant $\beta>0$ (depending on $t$). For every $\delta>0$, the aforementioned result from \cite{BMN}, Theorem \ref{thm_BMN} in the appendix of this paper,
produces a  Riemannian metric $\hat g$ on $M_t$ with the following properties:
\begin{enumerate}
\item[(a)] $\hat g = g_{++}$ in $M_t \setminus W$.
\item[(b)] $\hat g = \tilde g$ in some neighborhood of $\partial M_t$.
\item[(c)]  The scalar curvature $\hat R$ of $\hat g$ satisfies $\hat R(x) \geq \min( R_{++}(x), \tilde R(x)) - \delta$ for all $x \in M_t$.
\item[(d)] $\|\hat g - g_{++}\|_{C^0(M)} < \delta$ .
\end{enumerate}

 By (b), $\hat g$ pastes smoothly to $\tilde g$ over a neighborhood of the boundary in $\partial M_t$ and thus gives a Type 1 extension of $(\Omega, g_-)$. For $\delta>0$ sufficiently small (and taking $t>0$ sufficiently small earlier in the proof), the extension is admissible, and claims (i)--(iii) of Theorem \ref{thm_main} are clear from the construction and Lemma \ref{lemma_wlg}.
 
\section{Discussion of condition $\cN$}
\label{sec_N}

In this section we attempt to give a fairly comprehensive account of the numerous versions of  condition $\cN$ in the definition of Bartnik mass that have appeared in the literature, along with a discussion of some of their advantages and disadvantages. Recall that the imposition of $\cN$ is to exclude Bartnik's examples of hiding the region $\Omega$
behind a small neck in an extension.

\begin{enumerate}
\item[(i)] Bartnik originally considered Type 1 extensions and imposed the condition $\cN=$ ``$M \cup \Omega$ has no horizons,''
where he took a horizon to be stable minimal 2-sphere \cite{Ba1}. This prohibits horizons inside $\Omega$, or even horizons that cross over $\partial \Omega$. This choice of condition $\cN$ has the advantage that if $\Omega' \subset \Omega$, then an admissible Type 1 extension of $(\Omega,g)$ satisfying $\cN$ must extend over $\Omega \setminus \Omega'$ to an admissible Type 1 extension of $\Omega'$ satisfying $\cN$. This immediately implies monotonicity of the Bartnik mass, i.e.
\begin{equation}
\label{eqn_monotonicity}
m_B^{(1)}(\Omega', g|_{\Omega'}) \leq m_B^{(1)}(\Omega, g).
\end{equation}

In this case a horizon could also have been defined, without changing the class of admissible extensions satisfying $\cN$, to be as general as an immersed compact minimal surface in $M \cup \Omega$, since the existence of such would imply the existence of a stable minimal embedded 2-sphere (upon taking the outermost minimal surface; see  \cite[Lemma 4.1]{HI} and the references therein). This uses the fact that $M \cup \Omega$ has no boundary and the metric is smooth with nonnegative scalar curvature.

One issue associated with this choice of condition $\cN$ is that it causes the value of the Bartnik mass to depend on all of $\Omega$, and not just on the \emph{Bartnik data} of $\Omega$ (i.e., the induced metric and mean curvature on $\partial \Omega$).  This is pointed out explicitly in \cite{Sz}. Generally, the expectation is that quasi-local mass ought to depend only on the Bartnik data.

Finally, we remark that if Type 2 or 3 extensions are considered for this choice of $\cN$, then care must be taken to define (zero) mean curvature for surfaces that cross through $\partial \Omega$, where the metric may be only Lipschitz.

\smallskip

\item[(ii)] G. Huisken and T. Ilmanen modified Bartnik's choice of $\cN$ by allowing $M \cup \Omega$ to have minimal boundary components, provided they are the only minimal surfaces in $M$ \cite{HI}. This requires a) a slightly different definition of allowable region $\Omega$, in which multiple  components of $\partial \Omega$ are allowed, all being minimal surfaces, except for one component $\Sigma_-$ of positive mean curvature, and b) a slightly different definition of extension $M$ in which multiple components of $\partial M$ are allowed, all being minimal surfaces, except for one component $\Sigma_+$, and $\Sigma_-$ and $\Sigma_+$ are isometrically identified. This reformulation allows minimal surfaces on the boundary of $M$ outside of $\Omega$
and also allows $\Omega$ to surround minimal surfaces; it was later adopted by Bartnik \cite{Ba4}. For such a definition, $m_B^{(1)}$ satisfies monotonicity \eqref{eqn_monotonicity}; like (i) it depends on more than just the Bartnik data of $\Omega$. Huisken and Ilmanen conjectured the conditions $\cN$ in (i) and (ii) produce the same value for the Bartnik mass \cite{HI} when $\partial \Omega = \Sigma_-$.

\smallskip

\item[(iii)] Related to (ii), a variant on Bartnik's original condition would be to require that the extension $M$ contain no horizons (cf. \cites{CCMM, Mc, Sz} for instance). This allows horizons inside $\Omega$ and crossing over $\partial \Omega$. 
Here, a horizon could be defined in a number of ways, e.g. a compact minimal surface, either immersed or embedded, possibly stable, possibly spherical in topology. Due to the presence of the boundary, however, it is not clear that all of these choices of the definition of horizon lead to identical classes of extensions: the outermost minimal surface in $M \cup \Omega$ could very well cross into $\Omega$, i.e. not remain in $M$.

On the one hand, this type of condition $\cN$ could be more desirable than that in (i), in that it is less restrictive but still excludes the problematic small minimal necks. Moreover, with this choice of $\cN$, the definitions $m_B^{(2)}$ and $m_B^{(3)}$  manifestly depend only on the Bartnik data of $\Omega$, not on the interior geometry of $\Omega$. On the other hand, there is the downside with this definition of Bartnik mass that monotonicity could conceivably fail: Suppose $(\Omega, g_-)$ is an allowable region, and $(M,g_+)$ is an admissible extension (Type 1, say) containing no compact minimal surfaces. If $ \Omega' \subset \Omega$ and $M \cup(\Omega \setminus \Omega')$ were to contain a compact minimal surface, then $M \cup (\Omega \setminus \Omega')$ would not be an allowed competitor for the Bartnik mass of $\Omega'$ satisfying this choice of condition $\cN$. 
This possible failure of the monotonicity of the Bartnik mass was also discussed by Corvino \cite{Cor2}.

\smallskip

\item[(iv)] Similar in spirit to Huisken and Ilmanen's choice of $\cN$, another possibility is to choose $\cN$ to be the property that $(M,g)$ contains no compact minimal surfaces that surround $\partial M$ (cf. \cites{AJ, CW} for instance).  (A surface \emph{surrounds} $\partial M$ if any path from infinity to $\partial M$ intersects the surface.) This definition allows extensions that contain minimal surfaces, provided they do not ``hide'' $\Omega$ from the asymptotically flat end. It is not clear that this choice of $\cN$ implies or is implied by that in (ii).

For the ``no surrounding horizons'' version of $\cN$, it does not matter whether compact minimal surfaces are taken to be immersed or embedded (or stable, or topologically spherical). For if $M$ contains an immersed compact minimal surface surrounding $\partial M$, it contains an outermost minimal surface that is a union of stable minimal embedded 2-spheres (see Lemma 4.1 of \cite{HI} and the references therein).

As with (iii), this ``surrounding'' version of $\cN$ could conceivably violate monotonicity of the Bartnik mass, but it does only depend on the Bartnik data of $\Omega$ for $m_B^{(2)}$ and $m_B^{(3)}$.
\smallskip

\item[(v)] The above possibilities for condition $\cN$ all pertain to the absence of minimal surfaces. Alternatively, Bray adopted the requirement that $\partial M$ be outward-minimizing in $(M,g_+)$ \cite{Br}, i.e. not be enclosed by a surface of less area (see also \cite{BC}), with no restriction of minimal surfaces. It is not a priori clear how such a definition for the Bartnik mass compares to the ``no horizon'' definitions. For instance, there exist allowable regions $\Omega$ in $\R^3$ that are not outward-minimizing: then $\R^3 \setminus \Omega$ is a valid competitor for (i)--(iv), but not for (v). Conversely, there exist admissible extensions $(M,g)$ for which $\partial M$ is outward-minimizing but a horizon is present. However, in Theorem \ref{thm_ineq} we show an inequality between $m_B^{(3)}$ for the outward-minimizing and the no surrounding horizons definitions.

Monotonicity of the Bartnik mass for the outward-minimizing version of $\cN$ holds (see Theorem 4.4 of \cite{BC}): if $(\Omega, g_-)$ is an allowable region and $\Omega' \subset \Omega$ is outward-minimizing in $\Omega$, then \eqref{eqn_monotonicity} holds.

Miao used the requirement that $\partial M$ be \emph{strictly} outward-minimizing
in \cite{Mi2}. Below, in the context of openness and closedness of condition $\cN$, we contrast outward-minimizing with strictly outward-minimizing. In both definitions, it is immediate the value of the Bartnik mass depends only the Bartnik data, at least for $m_B^{(2)}$ and $m_B^{(3)}$.

Unfortunately, it is not clear that the outward-minimizing and strictly outward-minimizing versions of $\cN$ lead to equivalent Bartnik masses. One would need to show that an admissible extension $(M,g_+)$ for which $\partial M$ is outward-minimizing can be perturbed, preserving nonnegative scalar curvature and whichever boundary conditions are used, to an extension for which $\partial M$ is strictly outward-minimizing. This seems to be rather delicate.

\end{enumerate}

\emph{It is not known whether any of the choices of condition $\cN$ discussed above in (i)--(v) lead to equivalent values for the Barntik mass} (for any of Type 1, 2, or 3 extensions). Trivial inequalities are possible when one condition is implied by another, e.g. no surrounding compact minimal surfaces in $M$ vs. no compact minimal surfaces in $M$. See also Theorem \ref{thm_ineq}.

We also remark that of all the versions of condition $\cN$ above, only the outward-minimizing versions seem likely to satisfy both monotonicity and dependence only on Bartnik data.

\medskip
\paragraph{\emph{Openness of condition $\cN$:}}

In some contexts, it is desirable to work with a condition $\cN$ that is open in an appropriate topology. For instance, in establishing that a Bartnik mass minimizer is static vacuum (cf. \cites{Cor1, Cor2, AJ, HMM}), it is convenient if an admissible extension satisfying $\cN$ still satisfies $\cN$ upon a small perturbation in, say, $C^{2}_{-\tau}(M)$. In \cite{Mi2}, Miao used the openness of ``$\partial M$ is strictly outward-minimizing'' in $C^{2}_{-\tau}(M)$; cf. Corollary \ref{cor_open} herein. In \cite[Lemma 2.1]{AJ}, the authors showed that ``$(M,g)$ contains no immersed compact minimal surfaces surrounding $\partial M$'' is open in $C^{2}_{-\tau}(M)$. 

It is not clear that the condition $\cN$ described in (iii) is open in $C^2_{-\tau}(M)$ (cf. the proof of \cite[Lemma 2.1]{AJ} --- without having the minimal surfaces surrounding $M$, it is not clear how to produce stable minimal surfaces and to achieve an upper bound on their areas). The strictly outward-minimizing property is open in $C^{2}_{-\tau}(M)$, but unfortunately not in $C^{0}_{-\tau}(M)$. (To see this, suppose $(M,g)$ has strictly outward-minimizing boundary. Let $\{f_i\}$ be sequence of smooth, positive functions on $M$ that equal 1 outside a fixed compact set and 1 on $\partial M$, and such that $f_i \to 1$ uniformly, but the normal derivative of $f_i$ along $\partial M$ becomes arbitrarily negative as $i \to \infty$. Then the conformal metrics $f_i^4 g$ converge to $g$ in $C^0_{-\tau}(M)$, but for $i$ large have negative boundary mean curvature (by \eqref{eqn_mc_conf}) and hence do not have outward-minimizing boundary.) The outward-minimizing property is not open even in $C^{2}_{-\tau}(M)$: if $\partial M$ is outward-minimizing but not strictly so, then a small, smooth perturbation to the metric, compactly supported in the interior of $M$, could be arranged to cause $\partial M$ to fail to be outward-minimizing.

Another reason openness of $\cN$ is desirable is in the context of smoothing a Type 2 or 3 extension to a Type 1, as in Theorem \ref{thm_main}: if the initial extension satisfies $\cN$, it would be convenient if a perturbation also satisfied $\cN$. Unfortunately perturbations in $C^{2}_{-\tau}(M)$ are not sufficient: $\partial \Omega$ will generally have different second fundamental forms with respect to $g_-$ and $g_+$, and thus one cannot expect a $C^2$ (or even $C^1$) perturbation to smooth a Type 2 or 3 extension to Type 1.
Theorem \ref{thm_main} establishes that such a smoothing can be achieved with a $C^0$ perturbation. Unfortunately, however, \emph{all} of the conditions $\cN$ presented above fail to be open with respect to $C^0$ perturbations. This makes showing the equivalence of $m_B^{(1)}$, $m_B^{(2)}$, and $m_B^{(3)}$, challenging; we only succeed in this for some conditions $\cN$; see section \ref{sec_equiv}.

\medskip
\paragraph{\emph{Closedness of condition $\cN$:}}
In other contexts, it is desirable to work with a  condition $\cN$ that is closed. For instance, when attempting to realize the infimum in the definition of the Bartnik mass \eqref{eqn_m_B}, one would like to know that a convergent sequence of admissible extensions satisfying $\cN$ also satisfies $\cN$. Closedness fails for all of the versions (i)--(iv) of Bartnik's mass,  for the $C^{2}_{-\tau}(M)$ topology (and hence for $C^0_{-\tau}(M)$). For example, in rotational symmetry, an annular region foliated by spheres of positive mean curvature could converge smoothly to a cylindrical region foliated by minimal spheres. Similarly, it is not hard to see that closedness fails for the condition that $\partial M$ be strictly outward-minimizing, for $C^2_{-\tau}(M)$. However, it is not difficult to see that the condition that $\partial M$ be outward-minimizing is closed in $C^{0}_{-\tau}(M)$.

Corvino suggests that another approach to the closedness issue is to consider limits of admissible extensions (where the limit is smooth on compact subsets of the extension), i.e. taking the closure of the class of admissible extensions \cite{Cor2}.

\medskip

In summary, there are quite a number of reasonable possible conditions $\cN$, each carrying some advantages and disadvantages. Perhaps other conditions will be proposed in the future. Among all versions of $\cN$ discussed above, none is both open and closed in a reasonable topology. In section \ref{ss1}, however, we propose a slight modification to Bray's outward-minimizing version of the Bartnik mass that captures some of the advantages of both open and closed conditions. In particular, we are able to show the equivalence, for this definition, of considering Type 1, 2, and 3 extensions in the definition of Bartnik mass.

\section{Equivalence of $m_B^{(1)}, m_B^{(2)}, m_B^{(3)}$ for outward-minimizing versions of $\cN$}
\label{sec_equiv}

In this section, we use Theorem \ref{thm_main} to establish the equivalence of  $m_B^{(1)}, m_B^{(2)}, m_B^{(3)}$ in two cases. The first (subsection \ref{ss1}) is a new variant on the outward-minimizing condition. The second (subsection \ref{ss2}) is the strictly outward-minimizing condition. 

\subsection{The $\epsilon$-outward-minimizing condition}
\label{ss1}

Here we allow extensions whose boundaries are nearly outward-minimizing (within $\epsilon$ of being so), and then let $\epsilon$ go to zero. We show the corresponding $m_B^{(i)}$ are all  equal in Theorem \ref{thm_equal_eps}.

For $\epsilon \geq 0$, let $\cN_\epsilon$ be the property ``every surface enclosing $\partial M$ has area at least $|\partial M|_g - \epsilon$'', a weakening of the outward-minimizing condition. In fact $\cN_0$ is precisely the condition ``$\partial M$ is outward-minimizing.'' For $i=1,2,3$, define:
\begin{align}
\tilde m_B^{(i)}(\Omega, g_-) &= \lim_{\epsilon \to 0^+}  \inf \left\{m_{ADM}(M,g_+) \; | \;  (M,g_+) \text{ is a Type $i$ admissible extension of } \right.\nonumber\\
&\qquad \qquad \qquad \qquad \qquad \qquad\qquad \left.(\Omega,g_-) \text{ satisfying condition } \cN_\epsilon \right\}\label{eqn_m_B_tilde}
\end{align}

Bray's modification to Bartnik's mass, dubbed the \emph{outer mass}, is precisely $m_B^{(i)}$ for $\cN=\cN_0$, i.e. the same as \eqref{eqn_m_B_tilde} with the limit and infimum interchanged (where the limit of nested sets is understood as the intersection). The  number $\tilde m_B^{(i)}$ is $\leq$ the outer mass (for Type $i$), since  $\cN_0$ implies $\cN_\epsilon$ for $\epsilon>0$. We consider it likely that equality holds, although we do not pursue this here.

Below we demonstrate that the process of considering $\cN_\epsilon$ and letting $\epsilon \to 0$ as in \eqref{eqn_m_B_tilde} has both good closedness and openness properties.

For closedness, consider the following. Let $\{(M,g_j)\}$ be a sequence of Type $i$ admissible extensions of $(\Omega, g_-)$ that is a ``minimizing sequence'' for $\tilde m_B^{(i)}(\Omega, g_-)$, where $i\in\{2,3\}$. (We fix the smooth manifold $M$ for simplicity.)  That is, the sequence $\{m_{ADM}(M, g_j)\}$ converges to $\tilde m_B^{(i)}(\Omega, g_-)$ as $j \to \infty$, and for any $\epsilon > 0$, $(M, g_j)$ satisfies $\cN_\epsilon$ for $j$ sufficiently large. Suppose $g_j$ converges to a smooth AF metric $g$ in $C^0_{-\tau}(M)$ and converges in $C^1$ to $g$ near $\partial M$ (so that the mean curvatures converge). Then $(M,g)$ is an admissible extension (of Type 2 or 3)  of $(\Omega, g_-)$ and satisfies $\cN_0$. (Nonnegative scalar curvature is preserved when taking a $C^0$ limit; see \cite[Section 1.8]{Gromov}, \cite[Theorem 1]{Bamler}.) Thus $(M,g)$ is a valid competitor for $\tilde m_B^{(i)}(\Omega, g_-)$, which justifies our claim that this 
version of the Bartnik mass has good closedness properties.
In fact, the lower semicontinuity of ADM mass results in \cites{Jau2,JL} may be used to show that the ADM mass of $g$ is at most, and hence equal to, $\tilde m_B^{(i)}(\Omega, g_-)$.

Our main justification for asserting that definition \eqref{eqn_m_B_tilde} of the Bartnik mass has good openness properties is the following result, an application of Theorem \ref{thm_main}.

\begin{thm}
\label{thm_equal_eps}
Suppose $(\Omega, g_-)$ is an extendable allowable region. Then
$$\tilde m_B^{(1)}(\Omega, g_-)=\tilde m_B^{(2)}(\Omega, g_-)=\tilde m_B^{(3)}(\Omega, g_-).$$
\end{thm}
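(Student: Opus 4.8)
The plan is to prove the two inequalities $\tilde m_B^{(3)} \leq \tilde m_B^{(2)} \leq \tilde m_B^{(1)}$ (which follow immediately from the inclusions of the classes of Type $i$ extensions) and then the reverse chain $\tilde m_B^{(1)} \leq \tilde m_B^{(3)}$, for which I would use Theorem \ref{thm_main}. Since a Type $1$ extension is Type $2$ is Type $3$, and since for a fixed $\epsilon$ the class of Type $1$ admissible extensions satisfying $\cN_\epsilon$ is contained in the class of Type $2$, which is contained in the class of Type $3$, the infimum over the larger class is no larger; passing to the limit $\epsilon \to 0^+$ preserves this, giving $\tilde m_B^{(3)} \leq \tilde m_B^{(2)} \leq \tilde m_B^{(1)}$. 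The content is therefore entirely in showing $\tilde m_B^{(1)} \leq \tilde m_B^{(3)}$.

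For that direction, fix $\epsilon > 0$ and let $(M, g_+)$ be any Type $3$ admissible extension of $(\Omega, g_-)$ satisfying $\cN_\epsilon$; I may assume $m_{ADM}(M,g_+)$ is within $\epsilon$ of the infimum defining the $\cN_\epsilon$-term for Type $3$. Apply Theorem \ref{thm_main} with parameter $\epsilon' > 0$ (to be chosen small, depending on $\epsilon$ and on the geometry of $\partial M$ in $(M,g_+)$) and with a neighborhood $W$ of $\partial M$ to be specified: this yields a Type $1$ admissible extension $(M, \hat g)$ with $\|\hat g - g_+\|_{C^0(M)} < \epsilon'$, $\|\hat g - g_+\|_{C^2_{-1}(M \setminus W)} < \epsilon'$, and $|m_{ADM}(M,\hat g) - m_{ADM}(M,g_+)| < \epsilon'$. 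The key point is to verify that $(M,\hat g)$ satisfies $\cN_{2\epsilon}$ (or $\cN_{\epsilon + o(1)}$), i.e. that every surface enclosing $\partial M$ has $\hat g$-area at least $|\partial M|_{\hat g} - 2\epsilon$. Since $\hat g$ is $C^0$-close to $g_+$ globally, areas of any fixed surface differ by a factor close to $1$; but to control areas of \emph{all} enclosing surfaces — including ones that might dip far out into the asymptotically flat end or have large area — I would argue as follows. Areas of surfaces are continuous under $C^0$ metric perturbations in the sense that $|S|_{\hat g} \geq (1 - C\epsilon')|S|_{g_+}$ and $|\partial M|_{\hat g} \leq (1 + C\epsilon')|\partial M|_{g_+}$ for a dimensional constant $C$ (here using that $\|\hat g - g_+\|_{C^0} < \epsilon'$ controls the ratio of volume forms on $2$-planes uniformly). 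For any $S$ enclosing $\partial M$, the $\cN_\epsilon$ property of $g_+$ gives $|S|_{g_+} \geq |\partial M|_{g_+} - \epsilon$, hence $|S|_{\hat g} \geq (1-C\epsilon')(|\partial M|_{g_+} - \epsilon) \geq |\partial M|_{\hat g} - \epsilon - C\epsilon'(|\partial M|_{g_+} + \epsilon) - C\epsilon' |\partial M|_{g_+}$; choosing $\epsilon'$ small enough (depending only on $\epsilon$ and $|\partial M|_{g_+}$, which is bounded along a minimizing sequence) makes the extra terms less than $\epsilon$, so $(M,\hat g)$ satisfies $\cN_{2\epsilon}$. Therefore $(M,\hat g)$ is a valid competitor for the $\cN_{2\epsilon}$-infimum in the Type $1$ definition, and its ADM mass is within $\epsilon'$ of that of $(M,g_+)$, hence within $2\epsilon + \epsilon'$ of $\tilde m_B^{(3)}(\Omega,g_-)$ plus the minimizing-sequence error. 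Taking the infimum over such $(M,g_+)$, then $\epsilon' \to 0$, then $\epsilon \to 0^+$, yields $\tilde m_B^{(1)}(\Omega,g_-) \leq \tilde m_B^{(3)}(\Omega,g_-)$.

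The main obstacle I anticipate is the uniformity in the area comparison: one must ensure that the constant relating $\epsilon'$ to the failure of strict outward-minimizing does not blow up along a minimizing sequence of extensions. This is handled by noting that $|\partial M|_{g_+}$ is determined by the Bartnik data (the induced metric on $\partial\Omega$) and hence is a fixed finite number independent of the extension, so $\epsilon'$ can be chosen depending only on $\epsilon$ and the region $\Omega$. A secondary subtlety is that $\cN_\epsilon$ for the perturbed metric requires comparing $|S|_{\hat g}$ to $|\partial M|_{\hat g}$ rather than to $|\partial M|_{g_+}$, but since $\hat g = g_+$ near $\partial M$ can be \emph{almost} arranged — actually Theorem \ref{thm_main} only gives $C^0$-closeness near $\partial M$ in general, not equality, because the second fundamental forms differ — one uses the global $C^0$ bound $\|\hat g - g_+\|_{C^0(M)} < \epsilon'$ to control $|\partial M|_{\hat g}$ in terms of $|\partial M|_{g_+}$, as done above. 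With these points addressed, the chain of inequalities closes and the theorem follows. (One should also note that if no Type $3$ admissible extension satisfying any $\cN_\epsilon$ exists, then all three quantities are $+\infty$ and the statement is trivial; and the Type $1$ existence is guaranteed precisely by extendability together with Theorem \ref{thm_main} applied to a Type $3$ competitor, so the classes are simultaneously empty or nonempty.)
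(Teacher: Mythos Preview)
Your proposal is correct and follows essentially the same route as the paper's proof: establish the trivial chain $\tilde m_B^{(3)} \leq \tilde m_B^{(2)} \leq \tilde m_B^{(1)}$, then use Theorem \ref{thm_main} and the $C^0$-stability of condition $\cN_\epsilon$ to close the loop. The paper is terser---it simply asserts that a sufficiently $C^0$-small perturbation preserves $\cN_\epsilon$ (starting from $\cN_{\epsilon/2}$)---while you spell out the multiplicative area comparison; one simplification you may note is that $|\partial M|_{\hat g} = |\partial M|_{g_+} = |\partial\Omega|_{g_-}$ exactly, since both $\hat g$ and $g_+$ are extensions of $(\Omega,g_-)$ and hence induce the same metric on $\partial M$, so the ``secondary subtlety'' you raise does not actually arise.
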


\begin{proof}
Certainly
$$\tilde m_B^{(1)}(\Omega, g_-)\geq \tilde m_B^{(2)}(\Omega, g_-)\geq \tilde m_B^{(3)}(\Omega, g_-) \geq 0$$
is clear. We are done if $\tilde m_B^{(3)}(\Omega, g_-)=+\infty$; otherwise proceed as below.

Let $\epsilon > 0$, and take an admissible Type 3 extension $(M,g_+)$ of $(\Omega, g_-)$, satisfying $\cN_{\epsilon/2}$, whose ADM mass is less than $ \tilde m_B^{(3)}(\Omega, g_-) + \frac{\epsilon}{2}$.  By Theorem \ref{thm_main}, there exists an admissible Type 1 extension $(M,\hat g)$ of $(\Omega, g_-)$ with ADM mass less than $ \tilde m_B^{(3)}(\Omega, g_-) + \epsilon$. By choosing $\hat g$ close to $g_+$ in $C^0(M)$ as in Theorem \ref{thm_main}, we can arrange that $\hat g$ satisfies $\cN_\epsilon$. Thus,
\begin{align*}
\inf_{(M,g_+)} &\; \left\{m_{ADM}(M,g') \; | \;  (M,g') \text{ a Type $1$ admissible extension of } (\Omega,g_-) \text{ satisfying } \cN_\epsilon \right\}\\
  &\leq m_{ADM}(M, \hat g)\\
 &<  \tilde m_B^{(3)}(\Omega, g_-) + \epsilon.
\end{align*}
Taking limit $\epsilon \searrow 0$ yields the result.
\end{proof}

\begin{cor}
\label{cor_bartnik_data}
If $(\Omega, g_-)$ is an extendable allowable region, then the quantity $\tilde m_B^{(1)}(\Omega, g_-)$ depends only on the Bartnik data of $(\Omega, g_-)$.
\end{cor}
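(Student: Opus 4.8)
The plan is to derive the corollary directly from Theorem \ref{thm_equal_eps} together with the elementary observation that the Type 3 version of this Bartnik mass manifestly depends only on Bartnik data. Precisely, let $(\Omega_1, g_1)$ and $(\Omega_2, g_2)$ be two \emph{extendable} allowable regions with the same Bartnik data, i.e.\ there is a diffeomorphism $\phi : \partial \Omega_1 \to \partial \Omega_2$ with $\phi^*(g_2|_{T\partial\Omega_2}) = g_1|_{T\partial\Omega_1}$ and $H_{2-}\circ \phi = H_{1-}$. The goal is to show $\tilde m_B^{(1)}(\Omega_1,g_1) = \tilde m_B^{(1)}(\Omega_2,g_2)$; since extendability is not itself a property of the Bartnik data, this restriction to extendable regions is the correct reading of the statement.

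First I would observe that, for each fixed $\epsilon \geq 0$, a smooth AF 3-manifold $(M,g_+)$ with nonempty compact connected boundary is a Type 3 admissible extension of $(\Omega_1,g_1)$ satisfying $\cN_\epsilon$ if and only if it is a Type 3 admissible extension of $(\Omega_2,g_2)$ satisfying $\cN_\epsilon$. Indeed, the requirements that $g_+$ be AF, that its scalar curvature be nonnegative, and that $\cN_\epsilon$ hold all refer to $(M,g_+)$ alone, while the Type 3 condition requires only an isometry $\iota_j : \partial \Omega_j \to \partial M$ with $H_{j-} \geq H_+ \circ \iota_j$; and $\iota_1 := \iota_2 \circ \phi$ witnesses this for $\Omega_1$ exactly when $\iota_2$ does for $\Omega_2$, because $\phi$ matches both the induced boundary metrics and the boundary mean curvatures. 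Hence the two sets over which the infimum in \eqref{eqn_m_B_tilde} (with $i = 3$) is taken coincide for every $\epsilon > 0$, and the ADM mass functional is the same on each; so the infima agree for each $\epsilon$, and letting $\epsilon \searrow 0$ gives $\tilde m_B^{(3)}(\Omega_1,g_1) = \tilde m_B^{(3)}(\Omega_2,g_2)$ (including the degenerate case in which both are $+\infty$).

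Then I would apply Theorem \ref{thm_equal_eps}, which is available for each $(\Omega_j,g_j)$ precisely because it is extendable, to conclude $\tilde m_B^{(1)}(\Omega_j,g_j) = \tilde m_B^{(3)}(\Omega_j,g_j)$ for $j=1,2$; chaining the three equalities yields $\tilde m_B^{(1)}(\Omega_1,g_1) = \tilde m_B^{(1)}(\Omega_2,g_2)$, as desired. There is no real obstacle here: all the substance sits in Theorem \ref{thm_main} and hence Theorem \ref{thm_equal_eps}. The only point that warrants a moment's care is recognizing that both regions must be assumed extendable, and that — through the bijection of extension classes described above — having the same Bartnik data suffices to transport the entire $i=3$ minimization problem verbatim from one region to the other.
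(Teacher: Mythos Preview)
Your argument is correct and is precisely the intended one: the paper states the corollary as an immediate consequence of Theorem \ref{thm_equal_eps} without further proof, relying exactly on the fact that $\tilde m_B^{(3)}$ manifestly depends only on the Bartnik data. Your care in noting that both regions must be assumed extendable is appropriate, since that hypothesis is needed to invoke Theorem \ref{thm_equal_eps} on each side.
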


\subsection{The strictly outward-minimizing condition}
\label{ss2}
In this subsection we prove Theorem \ref{thm_equal_som}, stated in the introduction, regarding the equality of $m_B^{(1)}, m_B^{(2)}$, and $m_B^{(3)}$ for $\cN=$ ``$\partial M$ is strictly outward-minimizing.'' We first state two useful results on preserving this choice of $\cN$ under perturbations.

\begin{lemma}
\label{lemma_strict}
Let $M$ be an orientable, smooth 3-manifold, with nonempty compact boundary $\partial M$. Let $S$ be a smooth surface in the interior of $M$ that smoothly retracts onto $\partial M$. Let $O$ be a tubular neighborhood of $S$, with $\bar O$ contained in the interior of $M$. Suppose $g$ is a (continuous) asymptotically flat metric on $M$ that is smooth on $M \setminus O$, such that $\partial M$ has positive mean curvature in the direction pointing into $M$. Let $\{g_n\}$ be a sequence of smooth Riemannian metrics on $M$ such that $g_n \to g$ on $O$ in $C^0$, and $g_n \to g$ on $M \setminus O$ in $C^1_{-\tau}$.

If $\partial M$ is strictly outward-minimizing in $(M,g)$, then for all $n$ sufficiently large, $\partial M$ is strictly outward-minimizing in $(M,g_n)$.
\end{lemma}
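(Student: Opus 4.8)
\emph{The plan is a proof by contradiction, combining compactness of an outward-minimizing hull with a calibration argument near $\partial M$.} Suppose the conclusion fails; passing to a subsequence, $\partial M$ is \emph{not} strictly outward-minimizing in $(M,g_n)$ for every $n$. First I would record two robust facts. (a) A neighborhood of $\partial M$ lies in $M\setminus O$, so $g_n\to g$ there in $C^1$; since the mean curvature of a \emph{fixed} smooth surface depends on the metric only to first order, the $g$-equidistant surfaces $L_s=\{d_g(\cdot,\partial M)=s\}$ --- which foliate a fixed collar for $s$ below some $\bar s_0$, chosen with $8\bar s_0<d_g(\partial M,\bar O)$ and with the foliation smooth and of strictly positive $g$-mean curvature --- are still strictly mean-convex with respect to $g_n$ for $n$ large, uniformly in $s\in[0,8\bar s_0]$. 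Two consequences: by the maximum principle at the outermost leaf, for $n$ large there is no closed minimal surface of $g_n$ inside $\{d_g\le 8\bar s_0\}$; and, normalizing the $g$-gradient of $d_g$ to a $g_n$-unit field with nonnegative $g_n$-divergence, the divergence theorem gives the calibration estimate
\[
|\Sigma'\cap\{d_g<s\}|_{g_n}\ \ge\ |\partial M|_{g_n}-|E\cap\{d_g=s\}|_{g_n}
\]
for any region $E$ enclosing $\partial M$ whose outer boundary $\Sigma'$ is disjoint from $\partial M$, and any $s<\bar s_0$. (b) Since $g_n\to g$ in $C^0$ on all of $M$, areas satisfy $|\Sigma'|_{g_n}=(1+o(1))|\Sigma'|_g$ uniformly over surfaces $\Sigma'$.

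Next I would replace a violating surface by a hull. For each $n$, failure of the strict condition yields a nontrivial (and, we may assume, connected) region $\Omega_n^0$ enclosing $\partial M$ with $|\partial^*\Omega_n^0\setminus\partial M|_{g_n}\le|\partial M|_{g_n}$; let $\Omega_n\supseteq\Omega_n^0$ be an outward-minimizing hull and $\Sigma_n=\partial^*\Omega_n\setminus\partial M$. Then $\Sigma_n$ is $C^{1,1}$, and in the interior of $M$ it is smooth, stable and minimal with respect to $g_n$; $\Omega_n$ is nontrivial and $|\Sigma_n|_{g_n}\le|\partial M|_{g_n}$; and by the strict mean-convexity of $\partial M$ from (a) together with the maximum principle, $\Sigma_n$ is disjoint from $\partial M$. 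An area comparison in the asymptotically flat end (monotonicity formula, with uniform constants since the $g_n$ are uniformly close to $g$ out there) confines all $\Omega_n$ to a fixed compact set. Having uniformly bounded perimeter, the $\Omega_n$ subconverge in $L^1$ to a set $\Omega$ with $|\partial^*\Omega\setminus\partial M|_g\le|\partial M|_g$ by lower semicontinuity of perimeter and fact (b); strict outward-minimizing-ness of $\partial M$ in $(M,g)$ then forces $\Omega$ to be trivial, i.e. $|\Omega_n|\to 0$.

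Finally I would reach a contradiction with $|\Sigma_n|_{g_n}\le|\partial M|_{g_n}$. By the first consequence in (a), $\Sigma_n$ is not contained in $\{d_g\le 8\bar s_0\}$, so $\Omega_n$ contains points at $g$-distance $>8\bar s_0$ from $\partial M$; it also contains points arbitrarily close to $\partial M$, since $\Sigma_n\cap\partial M=\emptyset$. As $\Omega_n$ is connected, the intermediate value theorem produces $x_n\in\Omega_n$ with $d_g(x_n,\partial M)=4\bar s_0$; because $8\bar s_0<d_g(\partial M,\bar O)$, the ball $B_g(x_n,\bar s_0)$ avoids both $\bar O$ and the collar $\{d_g\le\bar s_0\}$, has uniformly positive volume, and --- as $|\Omega_n|\to 0$ --- is not contained in $\Omega_n$, so $\Sigma_n$ meets it at some $x_n'$. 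A small ball about $x_n'$ still avoids $\bar O$ and $\{d_g\le\bar s_0\}$ and lies where $g_n$ has uniform $C^1$ bounds, so the monotonicity formula for the minimal surface $\Sigma_n$ gives $|\Sigma_n\setminus\{d_g<\bar s_0\}|_{g_n}\ge\eta_0$ for a fixed $\eta_0>0$. Combining with the calibration estimate of (a) --- having chosen $\bar s_0$, via the coarea formula, so that $|\Omega_n\cap\{d_g=\bar s_0\}|_{g_n}\to 0$ along the subsequence --- yields $|\Sigma_n|_{g_n}\ge|\partial M|_{g_n}-o(1)+\eta_0>|\partial M|_{g_n}$ for $n$ large, the desired contradiction.

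The main obstacle is the collar $O$, across which $g_n$ converges only in $C^0$: the strict mean-convexity of the leaves $L_s$, the maximum principle excluding minimal surfaces near $\partial M$, and --- crucially --- the monotonicity formula needed to rule out a thin, far-reaching ``tentacle'' of the hull region $\Omega_n$ all require control of the metric to first order or better, and hence cannot be run inside $O$. The resolution is to confine \emph{every} surface, ball, and collar used in the argument quantitatively to $M\setminus\bar O$, using that $\partial M$ and $\bar O$ are a definite distance apart; this is also precisely what forces the argument through the area-minimizing hull rather than an arbitrary competitor surface.
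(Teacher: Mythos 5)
Your argument follows essentially the same path as the paper's: proof by contradiction via a minimizing area enclosure of $\partial M$, compactness and lower semicontinuity of perimeter to show the enclosed volume shrinks to $W$, a maximum principle to push the enclosure out of a fixed collar, a monotonicity formula to extract a uniform area contribution away from $\partial M$, and a cut near $\partial M$ to reach a contradiction. Your calibration vector field is a clean substitute for the paper's direct comparison of $|\partial(\tilde\Omega_n\cap A_t)|$ with $|\tilde\Sigma_n|$, and your coarea step plays the role of the paper's slicing theorem (Claim 5), so these are genuine but minor variants of the same scheme.

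The one place where a real gap opens is the choice of the surface on which you run the maximum principle and the monotonicity formula. You set $\Omega_n$ to be the outward-minimizing hull of an arbitrary witness $\Omega_n^0$ to the failure of strict outward-minimizing-ness, and assert that $\Sigma_n=\partial^*\Omega_n\setminus\partial M$ is smooth and minimal in the interior of $M$. That is not correct: the hull boundary is minimal only on the free part $\partial^*\Omega_n\setminus\partial^*\Omega_n^0$, while on the contact set $\partial^*\Omega_n\cap\partial^*\Omega_n^0$ --- which can lie anywhere in $M$, in particular at the outermost point of $\Sigma_n$ in the collar or near $x_n'$ --- it only satisfies a one-sided mean-curvature inequality ($H\geq 0$ in the viscosity/$C^{1,1}$ sense), and it is not locally area-minimizing there; in the extreme case that $\Omega_n^0$ is itself outward-minimizing, $\Sigma_n=\partial^*\Omega_n^0\setminus\partial M$ has no minimality whatsoever. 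This breaks both the maximum principle step (a mean-convex leaf of the foliation touching $\Sigma_n$ from outside at a contact point yields $H_{\Sigma_n}\geq H_{L_s}>0$, which is compatible with $H_{\Sigma_n}\geq 0$, so no contradiction) and the monotonicity step (the uniform density lower bound needs an almost-minimizing property at $x_n'$, with constants uniform in $n$, which you cannot obtain without regularity control on the obstacle $\partial^*\Omega_n^0$). The fix is simply to take $\Sigma_n$ to be the outermost minimal area enclosure of $\partial M$ in $(M,g_n)$ --- equivalently, the minimizing hull of the capped-off region $W$, as the paper does. Since that obstacle, $\partial W=\partial M$, sits on the boundary of $M$, the enclosure is genuinely smooth, minimal, and locally area-minimizing everywhere in the interior, which is exactly what the maximum principle and the $\gamma$-almost-minimizing monotonicity formula require; it is nontrivial precisely because $\partial M$ fails to be strictly outward-minimizing, and its area is still $\leq|\partial M|_{g_n}$. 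With that substitution the rest of your proof goes through unchanged.
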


Note that $C^1_{-\tau}$ convergence on $M \setminus O$ implies $C^1$ convergence on the bounded component of $M \setminus O$.

This lemma will be proved later in the section. The following corollary essentially states that strictly outward-minimizing is an open condition in $C^1_{-\tau}(M)$.

\begin{cor}
\label{cor_open}
Let $(M,g)$ be a smooth asymptotically flat 3-manifold, with nonempty compact boundary $\partial M$ of positive mean curvature that is strictly outward-minimizing. If $g_n$ is a sequence of asymptotically flat metrics on $M$ that converges to $g$ in $C^k_{-\tau}(M)$ for any $k \geq 1$, then for all $n$ sufficiently large, $\partial M$ is strictly outward-minimizing in $(M,g_n)$.
\end{cor}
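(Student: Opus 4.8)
The plan is to obtain this as an essentially immediate consequence of Lemma \ref{lemma_strict}, once the data $S$ and $O$ required by that lemma have been chosen. Since $g$ is smooth on all of $M$, I would first fix a surface $S$ in the interior of $M$ obtained by flowing $\partial M$ inward a small $g$-distance along the inward normal geodesics; for the distance small enough this is an embedded surface diffeomorphic to $\partial M$, and the normal exponential map furnishes a smooth retraction of $S$ onto $\partial M$. Then let $O$ be a tubular neighborhood of $S$ chosen small enough that $\bar O$ is contained in the interior of $M$ and disjoint from $\partial M$.

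Next I would verify that the hypotheses of Lemma \ref{lemma_strict} are met. The metric $g$ is continuous (in fact smooth, hence in particular smooth on $M \setminus O$) and asymptotically flat; by assumption $\partial M$ has positive mean curvature in $(M,g)$ in the direction pointing into $M$, and $\partial M$ is strictly outward-minimizing in $(M,g)$. Since $g_n \to g$ in $C^k_{-\tau}(M)$ with $k \geq 1$, we have in particular $g_n \to g$ in $C^1_{-\tau}(M)$; restricting this convergence gives $g_n \to g$ in $C^0$ on $O$ and $g_n \to g$ in $C^1_{-\tau}$ on $M \setminus O$, which are exactly the convergence hypotheses of Lemma \ref{lemma_strict}. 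Applying that lemma then yields that $\partial M$ is strictly outward-minimizing in $(M,g_n)$ for all $n$ sufficiently large, which is the assertion.

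I do not anticipate a genuine obstacle here: all of the real work has been absorbed into Lemma \ref{lemma_strict}, and the corollary is simply the specialization to the case of a globally smooth limit metric $g$, for which no special treatment of a non-smooth region is needed. The only points warranting a word of care are that $C^k_{-\tau}(M)$ convergence for $k \geq 1$ dominates both of the weaker convergences demanded by Lemma \ref{lemma_strict}, and that $S$ (hence $O$) can be chosen so as to fulfill the retraction and tubular-neighborhood requirements — both of which are routine, and the latter uses only the smoothness of $g$ near $\partial M$.
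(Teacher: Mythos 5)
Your proposal is correct and matches the intended derivation: the paper states Corollary \ref{cor_open} as an immediate consequence of Lemma \ref{lemma_strict} without giving details, and the natural way to fill them in is exactly what you do — take $S$ to be a small inward normal push of $\partial M$, choose a tubular neighborhood $O$ of $S$ with $\bar O$ in the interior, and observe that $C^k_{-\tau}(M)$ convergence with $k \geq 1$ supplies both the $C^0$ convergence on $O$ and the $C^1_{-\tau}$ convergence on $M \setminus O$ required by the lemma. No gaps.
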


Using these results, we now give:

\begin{proof}[Proof of Theorem  \ref{thm_equal_som}]
Let $(\Omega, g_-)$ be an extendable allowable region, and let $\epsilon > 0$. If $m_B^{(3)}(\Omega, g_-)= \infty$, then the same goes for $m_B^{(1)}(\Omega, g_-)$ and $m_B^{(2)}(\Omega, g_-)$. Thus, assume $m_B^{(3)}(\Omega, g_-)$ is finite, and let $(M,g_+)$ be a Type 3 admissible extension of $(\Omega, g_-)$, with strictly outward-minimizing boundary, such that
\begin{equation}
\label{m_b_eps}
m_{ADM}(M, g_+) < m_B^{(3)}(\Omega, g_-) + \epsilon.
\end{equation}
Without loss of generality, by Lemma \ref{lemma_wlg} we assume $(M,g_+)$ has positive scalar curvature near $\partial M$, and $H_+ < H_-$. By Corollary \ref{cor_open}, $\partial M$ may still be assumed to be strictly outward-minimizing.

We need to revisit the proof of Theorem \ref{thm_main} to establish that the strictly outward-minimizing condition can be preserved in the $C^0$ smoothing to a Type 1 extension, despite this condition not being open in $C^0_{-\tau}(M)$ (see the second paragraph of the discussion \emph{Openness of condition $\cN$} in section \ref{sec_N}). As in the proof of Theorem \ref{thm_main}, take the local extension $(U, \tilde g)$, the family of surfaces $\Sigma_t= \partial M_t$, the family of diffeomorphisms $\Phi_t$, and the Riemannian metrics $g_+^{(t)}$ on $M_t$.

We first note that for $t$ sufficiently small, $\Sigma_t$ is strictly outward-minimizing in $(M, g_+^{(t)})$. This follows from Corollary \ref{cor_open}, applied to the path of metrics $\Phi_t^* g_+^{(t)}$ on $M$ converging smoothly to $g_+$ (and equaling $g$ outside a compact set), hence converging to $g_+$ in $C^1_{-\tau}(M)$.

We claim that for $t$ sufficiently small, $\Sigma$ is strictly outward-minimizing in $M$ with respect to the Lipschitz metric obtained by gluing $\tilde g$ to $g_+^{(t)}$ along $\Sigma_t$. Call this metric $(\tilde g, g_+^{(t)})$. To show this claim observe that any competitor for the minimum enclosing area of $\partial M$ with respect to  $(\tilde g, g_+^{(t)})$ can be assumed to be enclosed by $\Sigma_t$, by the note in the previous paragraph. But for all $t$ sufficiently small, $\tilde H_t$ is positive. Then by the first variation of area formula, any such competitor can have its area strictly reduced by flowing inward towards $\Sigma$, except for $\Sigma$ itself. More precisely, the outermost minimal area enclosure of $\Sigma$ (see the proof of Lemma \ref{lemma_strict} for more details) is enclosed by $\Sigma_t$ and can contradictorily have its area decreased by such a flow, unless it equals $\Sigma$ itself. This proves the claim.

Fix such a sufficiently small value of $t>0$. We again apply the smoothing technique in \cite{BMN}, but now in conjunction with Lemma \ref{lemma_strict}. To do so, choose the surface $S$ in the lemma to be $\Sigma_t$, and fix any tubular neighborhood $O$ whose closure lies in the interior of $M$. Using Theorem \ref{thm_BMN}, there exists a sequence of smooth Riemannian metrics $\{\hat g_n\}$ on $M_t$ with nonnegative scalar curvature, with $\hat g_n = \tilde g$ on some ($n$-dependent) neighborhood of $\Sigma_t$ in $M_t$, with $\hat g_n = g_+^{(t)}$ in $M_t \setminus O$, and with $\hat g_n$ converging to $g_+^{(t)}$ in $C^0(M_t)$ as $n \to \infty$. We extend $\hat g_n$ to $M$ by smoothly gluing it to $\tilde g$. Then $\hat g_n$ converges to $(\tilde g, g_+^{(t)})$ in $C^0(M)$. By Lemma \ref{lemma_strict}, for $n$ large enough, $\Sigma$ is strictly outward minimizing in $(M, \hat g_n)$, which itself is a Type 1 admissible extension of $(\Omega, g_-)$, with ADM mass equal to that of $g_+$. Using \eqref{m_b_eps}, the proof is complete.
\end{proof}

We also state one immediate corollary of Theorem \ref{thm_equal_som}:

\begin{cor}
	\label{cor_bartnik_data2}
	If $(\Omega, g_-)$ is an extendable allowable region, then $m_B^{(1)}(\Omega, g_-)$ depends only on the Bartnik data of $(\Omega, g_-)$, if $\cN$ is chosen to be ``$\partial N$ is strictly outward-minimizing.''
\end{cor}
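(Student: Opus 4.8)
The plan is to obtain this as a direct consequence of Theorem \ref{thm_equal_som}, exploiting the fact that $m_B^{(2)}$ and $m_B^{(3)}$ are manifestly functions of the Bartnik data alone. To make the statement precise, I would phrase it as follows: if $(\Omega_1, g_1)$ and $(\Omega_2, g_2)$ are two extendable allowable regions for which there is an isometry $\partial\Omega_1 \to \partial\Omega_2$ carrying the outward boundary mean curvature of $\partial\Omega_1$ to that of $\partial\Omega_2$, then $m_B^{(1)}(\Omega_1, g_1) = m_B^{(1)}(\Omega_2, g_2)$ when $\cN$ is the condition ``$\partial M$ is strictly outward-minimizing.''

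The first step is to record that $m_B^{(3)}$ depends only on the Bartnik data. By definition, a smooth AF $(M, g_+)$ with $R \geq 0$ is a Type 3 admissible extension of $(\Omega, g_-)$ precisely when there is an isometry $\iota : \partial\Omega \to \partial M$ with $H_+ \leq H_- \circ \iota^{-1}$, a requirement that involves $(\Omega, g_-)$ only through its induced boundary metric and boundary mean curvature. Moreover, the chosen $\cN$ is a property of $(M, g_+)$ alone. Hence the class of competitors in \eqref{eqn_m_B} defining $m_B^{(3)}(\Omega, g_-)$, and therefore the infimum itself, is unchanged if $(\Omega, g_-)$ is replaced by any allowable region with the same Bartnik data; in particular $m_B^{(3)}(\Omega_1, g_1) = m_B^{(3)}(\Omega_2, g_2)$. (The identical argument applies to $m_B^{(2)}$, with $H_+ = H_-$ in place of $H_+ \leq H_-$, though only the $m_B^{(3)}$ statement is needed here.)

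The second step is to apply Theorem \ref{thm_equal_som} to each region separately: since each $(\Omega_i, g_i)$ is extendable, $m_B^{(1)}(\Omega_i, g_i) = m_B^{(3)}(\Omega_i, g_i)$ for $i = 1, 2$. Chaining this with the first step gives
$$m_B^{(1)}(\Omega_1, g_1) = m_B^{(3)}(\Omega_1, g_1) = m_B^{(3)}(\Omega_2, g_2) = m_B^{(1)}(\Omega_2, g_2),$$
which is the claim.

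There is no substantive obstacle; the one point worth a remark is that extendability is not itself a function of the Bartnik data — the discussion following Definition \ref{def_extendable} exhibits non-extendable allowable regions — so Theorem \ref{thm_equal_som} can only be invoked once both regions are assumed extendable. Accordingly I would read the corollary as asserting that the restriction of $m_B^{(1)}$ to the class of extendable allowable regions factors through the Bartnik data, and state it that way; no estimates or new constructions beyond Theorem \ref{thm_equal_som} are required.
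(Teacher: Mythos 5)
Your proof is correct and takes exactly the route the paper intends: the paper states this as an ``immediate corollary'' of Theorem~\ref{thm_equal_som} without spelling out the argument, and your chaining $m_B^{(1)} = m_B^{(3)}$ with the manifest Bartnik-data dependence of $m_B^{(3)}$ is precisely that omitted argument. Your remark that extendability itself is not a Bartnik-data condition, so that both regions must be assumed extendable before Theorem~\ref{thm_equal_som} can be applied, is a worthwhile clarification of the scope of the corollary.
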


The essential difficulty in proving Lemma \ref{lemma_strict} is ruling out the possibility that the least-area enclosure of $\partial M$ with respect to $g_n$ reaches into $O$. The basic idea for the proof was inspired by the proof of \cite[Lemma 34]{JL}, which itself was a generalization of an argument in case 3 in the proof of \cite[Theorem 1.1]{Jau2}.

\begin{proof}[Proof of Lemma \ref{lemma_strict}]
In this proof let $\widehat M$ be a smooth manifold without boundary obtained by smoothly gluing an open 3-ball or handlebody $W$ to $\partial M$. Extend $g$ arbitrarily to a Riemannian metric on $\widehat M$ that is smooth on $\widehat M \setminus O$. By a ``surface in $M$ enclosing $\partial M$'', we mean the boundary of a bounded open set in $\widehat M$ that contains $W$. For example, $\partial M = \partial W$ is a surface in $M$ enclosing $\partial M$. We say one surface in $M$ enclosing $\partial M$ encloses another such surface if we have containment of the corresponding bounded open sets in $\widehat M$.

Let $\tilde \Sigma_n$ be the outermost minimal area enclosure of $\partial M$ in $(M,g_n)$ (which exists, has $C^{1,1}$ regularity, and is a smooth minimal surface away from $\partial M$ by standard results in geometric measure theory; see \cite[Theorem 1.3(iii)]{HI} for example). This means that $\tilde \Sigma_n$ is a surface in $M$ enclosing $\partial M$; $\tilde \Sigma_n$ has the least $g_n$-area among such; and $\tilde \Sigma_n$ encloses all other such least-area surfaces enclosing $\partial M$, if any. If $\tilde \Sigma_n = \partial M$ for all $n$ sufficiently large, then the claim holds. Thus, we pass to a subsequence (without changing the notation) so that $\tilde \Sigma_n \neq \partial M$ for all $n$.

By the $C^0$ convergence of $g_n$ to $g$, there exists a sequence of real numbers $\alpha_n \geq 1$  converging to 1 so that
 \begin{equation}
\label{alpha_n}
 \alpha_n^{-1} | \cdot |_g \leq |\cdot|_{g_n} \leq \alpha_n|\cdot|_g,
\end{equation}
where $|\cdot|$ represents area of a surface in $M$ with respect to a given metric.
 
 Let $\Sigma_t$ be the surface consisting of points in $M$ of $g$-distance $t$ from $\partial M$, smooth for $t \in [0,t_*]$, for some $t_*>0$. Let $A_t$ be the open set in $\hat M$ enclosed by $\Sigma_t$. We refer the reader to Figure \ref{fig_lemma} for illustrations of most of the objects involved in the proof.

\begin{figure}[ht]
\begin{center}
\includegraphics[scale=0.75]{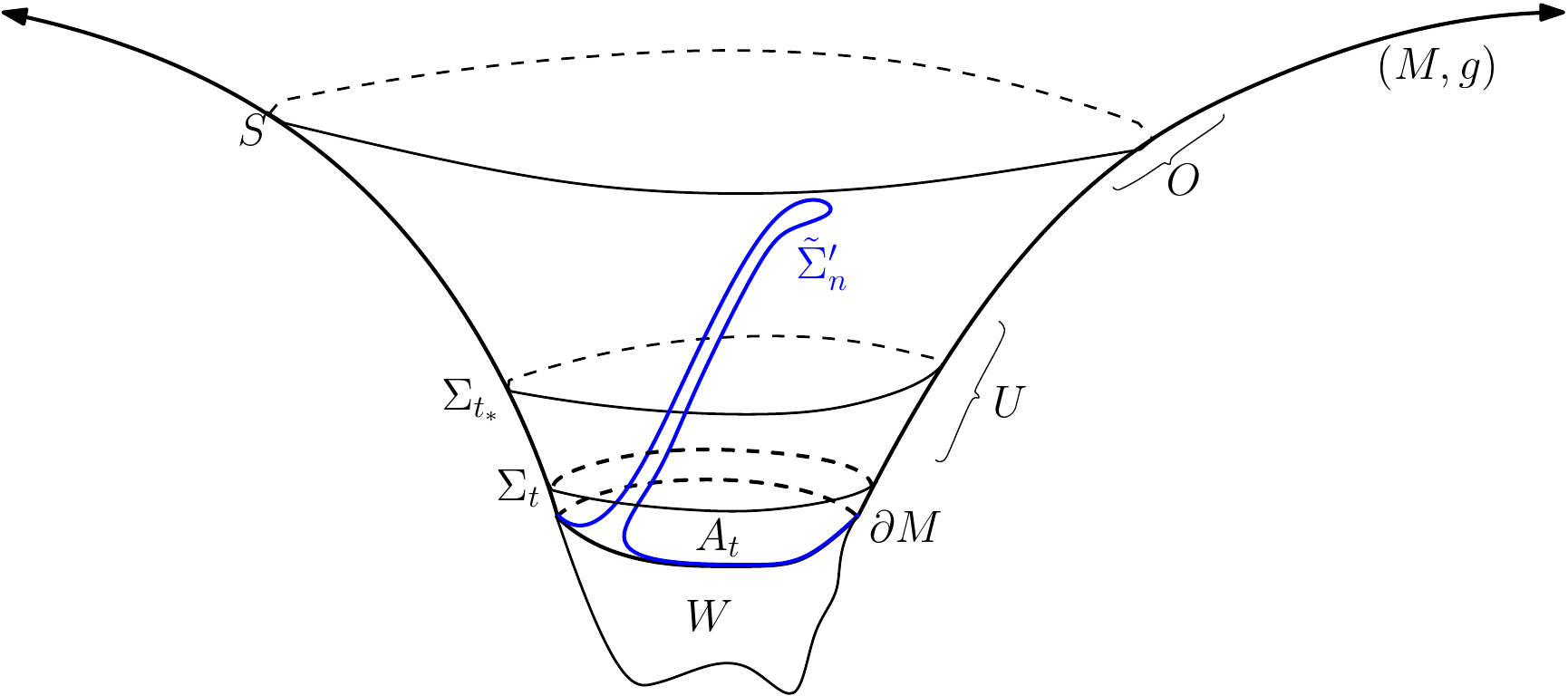}
\end{center}
\caption{\small This shows many of the sets in the proof of Lemma \ref{lemma_strict}.
\label{fig_lemma} }
\end{figure}

We proceed by establishing several claims.
\medskip
\paragraph{Claim 0:} $\displaystyle \lim_{n \to \infty} |\tilde \Sigma_n|_{g_n} =  |\partial M|_g$. Proof: On the one hand, by the definition of $\tilde \Sigma_n$,
$$  |\tilde \Sigma_n|_{g_n} \leq |\partial M|_{g_n},$$
the latter of which converges to $|\partial M|_g$. 
On the other hand,
$$ |\tilde \Sigma_n|_{g_n} \geq \alpha_n^{-1} |\tilde \Sigma_n|_g > \alpha_n^{-1} |\partial M|_g,$$
since $\partial M$ is strictly outward-minimizing in $(M,g)$.
Together, these imply Claim 0.

\medskip
\paragraph{Claim 1:}  $\displaystyle \liminf_{n \to \infty} d_g(\tilde \Sigma_n, \partial M) = 0$ (where $d_g(\cdot, \cdot)$ is the minimum distance between the respective surfaces with respect to $g$). Proof: If not, pass to a subsequence for which
\begin{equation}
\label{d_c}
d_g(\tilde \Sigma_n, \partial M)  \geq c
\end{equation}
for some constant $c>0$ independent of $n$. There exists some surface $\sigma$ in $M$ enclosing $\partial M$ that i) does not intersect $\partial M$, ii) is contained in $A_c$, and iii) is strictly outward-minimizing with respect to $g$. (One way to construct $\sigma$ is to flow $\partial M$ by inverse mean curvature flow for a short time, and use the ``smooth start lemma'' (Lemma 2.4) of Huisken--Ilmanen \cite{HI} to assure that the flowed surface remains strictly outward-minimizing.) By \eqref{d_c}, $\tilde \Sigma_n$ encloses $\sigma$ for all $n$.
We now obtain a string of inequalities:
$$|\partial M|_g < |\sigma|_g \leq |\tilde \Sigma_n|_g \leq \alpha_n |\tilde \Sigma_n|_{g_n} \leq \alpha_n |\partial M|_{g_n}.$$
Taking $\displaystyle\lim_{n \to \infty}$ leads to a contradiction, proving Claim 1.

\medskip
Using Claim 1,  we now pass to a subsequence for which $d_g(\tilde \Sigma_n, \partial M)$ converges to $0$ as $n \to \infty$. In particular, $\tilde \Sigma_n$ has a connected component $\tilde \Sigma_n'$ for each $n$, such that the sequence $d_g(\tilde \Sigma_n', \partial M)$ converges to 0. 

\medskip
\paragraph{Claim 2:} Possibly shrinking $t_*>0$, $\tilde \Sigma_n'$ intersects $\Sigma_{t_*}$ for all $n$ sufficiently large. Proof: choose $t_*$ sufficiently small so that $\Sigma_{t_*}$ lies in the bounded component of $M \setminus \overline{O}$, and the mean curvature of $\Sigma_t$ with respect to $g$ is strictly positive for all $t \in [0,t_*]$. Since $g_n \to g$ in $C^1$ on this component, we may choose $N$ sufficiently large so that the mean curvature of $\Sigma_t$ with respect to $g_n$ is strictly positive for all $t \in [0,t_*]$ and all $n \geq N$.

By Claim 1, we may increase $N$ if necessary so that $d_g(\tilde \Sigma_n', \partial M) < t_*$ for all $n \geq N$. Now, for $n \geq N$, if $\tilde \Sigma_n'$ did not intersect $\Sigma_{t_*}$, then by connectedness, $\tilde \Sigma_n'$ would lie inside $A_{t_*}$. Note $\tilde \Sigma_n' \neq \partial M$, or else $\tilde \Sigma_n = \partial M$ as well, contrary to the hypothesis. Thus, $\tilde \Sigma_n'$ has a point in the interior of $M$ where locally it is a minimal surface with respect to $g_n$ and is tangent to some $\Sigma_t$, for some $t \in (0,t_*)$. This is a contradiction to the standard comparison principle for mean curvature. (Specifically, we mean that if two smooth surfaces are tangent at a point, and one locally lies to one side of the other, then there is an inequality between their mean curvatures at the point of tangency. This can be seen by writing the surfaces locally as a graph and looking using the mean curvature formula for graphs.)

\medskip
Using Claim 2,  we now fix such a $t_*$ and truncate finitely many terms of the sequence so that now $\tilde \Sigma_n'$ intersects $\Sigma_{t_*}$ for all $n$.

\medskip
\paragraph{Claim 3:} Let $U$ be a tubular neighborhood about $\Sigma_{t_*}$ of sufficiently small $g$-radius $r_0< t_*$ so that $\bar U \subset \interior(M) \setminus \bar O$, as shown in Figure \ref{fig_lemma}. Then (we claim) there exists some $b>0$, independent of $n$, so that
$$|\tilde \Sigma_n \cap U|_{g_n} \geq b$$
for all $n$. Proof: By hypothesis, $S$ and $\partial M$ bound a region $R$ that can be smoothly embedded in $\R^3$; thus we may consider the Euclidean metric $\delta$ on $R$ (which contains $\bar U$). By \eqref{alpha_n}, and since $g$ and $\delta$ are uniformly equivalent on $R$, there exists some $C> 1$ so that
\begin{equation}
\label{eqn_C}
C^{-1} | \cdot |_\delta \leq |\cdot|_{g_n} \leq C|\cdot|_\delta
\end{equation}
on $R$, for all $n$.
Since $\tilde \Sigma_n'$ is an area-minimizer in $\interior(R)$ with respect to $g_n$, then we have $\tilde \Sigma_n'$ is a $\gamma$-almost-minimizer\footnote{Recall that for a real number $\gamma\geq 1$, an integral current $T$ in $\R^n$ is \emph{$\gamma$-almost area-minimizing} if, for any ball $B$ with $B\cap\spt \partial T=\emptyset$ and any integral current $T'$ with $\partial T'=\partial(T\llcorner B)$, we have $|T\llcorner B|\leq\gamma |T'|$. Here, $T \llcorner B$ is the restriction of $T$ to $B$. Since the integral currents we deal with are submanifolds, we will abuse notation slightly and use the intersection in place of restriction.} of area in $R$ with respect to $\delta$, for the value $\gamma=C^2$. 

By Claim 2, $\tilde \Sigma_n'$ intersects $\Sigma_{t_*}$ for each $n$, say at point $p_n$. Using the monotonicity formula for $\gamma$-almost-minimizing currents (see, for example, \cite[Lemma 5.1]{BL}), we have for some $r_1>0$ (independent of $n$),
$$|\tilde \Sigma_n' \cap B(p_n, r_1)|_{\delta} \geq 4\pi\gamma^{-1}r_1^2,$$
where the ball $B(p_n,r_1)$ is taken with respect to $\delta$, and $B(p_n,r_1) \subset U$.
Using this and \eqref{eqn_C},
$$|\tilde \Sigma_n \cap U|_{g_n} \geq |\tilde \Sigma_n' \cap B(p_n, r_1)|_{g_n} \geq 4\pi\gamma^{-1} C^{-1}r_1^2.$$
This implies Claim 3.

\medskip
\paragraph{Claim 4:} If $\tilde \Omega_n$ is the bounded region in $\widehat M$ that $\tilde \Sigma_n$ bounds, then $\displaystyle \liminf_{n \to \infty} \vol_g(\tilde \Omega_n \setminus W)=0$. Proof: Since $g$ is asymptotically flat and $g_n \to g$ in $C^1_{-\tau}$ outside a compact set, we have that all $\tilde \Omega_n$ remain inside a fixed compact set in $\widehat M$. By the $C^0$ convergence of $g_n$ to $g$ and the fact $|\tilde \Sigma_n|_{g_n} \leq |\partial M|_{g_n}$ (the latter of which converges to $|\partial M|_g$), the $g$-areas of $\tilde \Sigma_n$ and the $g$-volumes of $\tilde \Omega_n$ are uniformly bounded above. Since $M$ is orientable, we can view each $\tilde \Omega_n$ in a natural way as an integral 3-current of multiplicity 1 on $\widehat M$. 

By the Federer--Fleming compactness theorem for integral currents \cite[4.2.17]{Fed}, a subsequence of $\tilde \Omega_n$ converges in both the flat and weak sense to some integral 3-current $\tilde \Omega$ of multiplicity 1 (which can also be viewed as a bounded open set in $\widehat M$ that contains $W$). In particular, i) the boundaries $\partial \tilde \Omega_n = \tilde \Sigma_n$ converge in the weak sense to $\partial \tilde \Omega$ as integral 2-currents, and ii) the $g$-volume of the symmetric difference $\tilde \Omega_n \,\triangle\, \tilde \Omega$ converges to zero. By i), the lower semicontinuity of areas for weak convergence, and Claim 0, we have
$$|\partial M|_g = \lim_{n \to \infty} |\tilde \Sigma_n|_{g_n} \geq |\partial \tilde \Omega|_g.$$
But since $\partial M$ is strictly outward-minimizing in $(M,g)$, we must have $\partial \tilde \Omega = \partial M$, i.e., $\tilde \Omega = W$. Then by ii), the $g$-volume of $\tilde \Omega_n \setminus W$ converges to zero.

\medskip
Using Claim 4,  we now pass to a subsequence for which $\vol_g(\tilde \Omega_n \setminus W)$ converges to 0 as $n \to \infty$.

\medskip
\paragraph{Claim 5:} For almost all $t \in (0, t_*]$, $\displaystyle\lim_{n \to \infty} |\Sigma_t \cap \tilde \Omega_n|_{g_n}=0$.  Proof: 
First,  $\displaystyle\lim_{n \to \infty} |\Sigma_t \cap \tilde \Omega_n|_{g}=0$ for almost all $t \in (0, t_*]$ follows from Claim 4 and the slicing theorem for normal currents \cite[4.2.1]{Fed}. With this, Claim 5 follows from \eqref{alpha_n}.

\medskip
\paragraph{Completion of the proof of the lemma:} We arrive at a contradiction as follows. Fix a value of $t \in (0,t_* - r_0)$ that satisfies the conclusion of Claim 5. Note that $\Sigma_t$ is contained in the region between $\bar U$ and $\partial M$. 

We claim that $\partial (\tilde \Omega_n \cap A_t)$ has strictly less $g_n$-area than $\tilde \Sigma_n$ (for large $n$), which contradicts the latter being an area-minimizer among boundaries of regions that contain $\partial M$.

To see this, we have on the one hand
\begin{equation}
\label{one}
|\partial (\tilde \Omega_n \cap A_t)|_{g_n} = |\tilde \Sigma_n \cap A_t|_{g_n} + |\Sigma_t \cap \tilde \Omega_n|_{g_n}.
\end{equation}
On the other hand,
\begin{align}
|\tilde \Sigma_n|_{g_n} &= |\tilde \Sigma_n \cap A_t|_{g_n} + |\tilde \Sigma_n \setminus A_t|_{g_n} \nonumber\\
&\geq  |\tilde \Sigma_n \cap A_t|_{g_n} + |\tilde \Sigma_n \cap U|_{g_n}\nonumber\\
&\geq  |\tilde \Sigma_n \cap A_t|_{g_n} +b \label{two},
\end{align}
by Claim 3. 
Comparing statements \eqref{one} and \eqref{two} and using Claim 5, we see that $\partial (\tilde \Omega_n \cap A_t)$ has strictly less $g_n$-area than $\tilde \Sigma_n$ for all $n$ sufficiently large, a contradiction. This completes the proof of the lemma.
\end{proof}

\section{An inequality between the no-horizons and the outward-minimizing Bartnik masses}
\label{sec_ineq}

As discussed in section \ref{sec_N}, point (v), there is no a priori comparison between the ``no-horizons'' and the outward-minimizing definitions of the Bartnik mass. To reiterate, on the one hand, it is possible for $M$ to contain no compact minimal surfaces and yet for
 $\partial M$ to fail to be outward-minimizing (a point which is occasionally overlooked). On the other hand, it is possible for $\partial M$ to be outward-minimizing and yet contain a compact minimal surface surrounding $\partial M$.
 
In this section we prove an inequality between the ``no surrounding horizons'' and the ``outward-minimizing'' versions of the Bartnik mass. The quantity $\lambda_1(-\Delta +K)$ below will be  the lowest eigenvalue of $-\Delta + K$ ($K$ being the Gauss curvature)  on $\partial \Omega$.

\begin{thm}
\label{thm_ineq} 
Let $(\Omega,g_-)$ be an allowable region for which $\lambda_1(-\Delta +K) > 0$ on $\partial \Omega$, with $\partial \Omega$ topologically a 2-sphere. Then the value of $m_B^{(3)}(\Omega, g_-)$ defined for $\cN$ = ``$\partial M$ is outward-minimizing in $M$'' (or strictly outward-minimizing) is greater than or equal to its value defined for $\cN$ = ``$M$ contains no compact minimal surfaces surrounding $\partial M$.''
\end{thm}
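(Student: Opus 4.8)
The plan is to show that for \emph{every} Type 3 admissible extension $(M,g_+)$ of $(\Omega,g_-)$ whose boundary is outward-minimizing one has $m_B^{(3)}(\Omega,g_-)\le m_{ADM}(M,g_+)$ for the ``no surrounding horizons'' choice of $\cN$; taking the infimum over all such $(M,g_+)$ then gives the asserted inequality (and if there are no outward-minimizing Type 3 admissible extensions at all, the left-hand side is $+\infty$ and there is nothing to prove). Throughout write $m^{nh}$ for $m_B^{(3)}(\Omega,g_-)$ defined with $\cN=$ ``$M$ contains no compact minimal surface surrounding $\partial M$'', let $\gamma=g_-|_{T\partial\Omega}$, and put $A=|\partial\Omega|_{g_-}$. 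The strictly outward-minimizing version of the theorem follows from the same argument, since a strictly outward-minimizing boundary is in particular outward-minimizing.

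The first ingredient is an \emph{a priori} upper bound $m^{nh}\le\sqrt{A/16\pi}$ obtained from the Mantoulidis--Schoen construction. Since $\partial\Omega$ is a topological $2$-sphere and $\lambda_1(-\Delta+K)>0$ on it, $(\partial\Omega,\gamma)$ is precisely the ``horizon data'' treated in \cite{MS}: for each $\epsilon>0$ there is an asymptotically flat $3$-manifold $(N,g_N)$ with $R_{g_N}\ge 0$, with connected minimal boundary $\partial N$ isometric to $(\partial\Omega,\gamma)$, arranged so that $\partial N$ is the only compact minimal surface of $N$, and with $m_{ADM}(N,g_N)<\sqrt{A/16\pi}+\epsilon$. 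Gluing $(N,g_N)$ to $(\Omega,g_-)$ along the isometry $\partial N\cong\partial\Omega$ produces an asymptotically flat manifold with Lipschitz metric across $\Sigma:=\partial\Omega$; since the boundary mean curvatures satisfy $H_->0=H_+$, this is a Type 3 admissible extension (nonnegative scalar curvature holds distributionally across $\Sigma$). Because the only compact minimal surface of $N$ is $\partial N$ itself, which does not surround $\partial N$, this extension satisfies the ``no surrounding horizons'' condition, and its ADM mass equals $m_{ADM}(N,g_N)<\sqrt{A/16\pi}+\epsilon$. Letting $\epsilon\to 0$ yields $m^{nh}\le\sqrt{A/16\pi}$.

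The second ingredient dichotomizes over a given outward-minimizing Type 3 admissible extension $(M,g_+)$. If $(M,g_+)$ contains no compact minimal surface surrounding $\partial M$, then it is itself an admissible competitor for the no-horizons definition, so $m^{nh}\le m_{ADM}(M,g_+)$ directly. Otherwise, let $\Sigma_H$ be the outermost minimal surface in $(M,g_+)$, which exists with the usual $C^{1,1}$ regularity because $\partial M$ is weakly mean-convex (being outward-minimizing); since $(M,g_+)$ contains a compact minimal surface surrounding $\partial M$, the surface $\Sigma_H$ also surrounds, hence encloses, $\partial M$, and the region of $(M,g_+)$ exterior to $\Sigma_H$ is asymptotically flat with nonnegative scalar curvature and minimal boundary $\Sigma_H$. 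The Riemannian Penrose inequality \cites{Br,HI} applied to this exterior region gives $m_{ADM}(M,g_+)\ge\sqrt{|\Sigma_H|_{g_+}/16\pi}$, and since $\Sigma_H$ encloses the outward-minimizing boundary $\partial M$ we have $|\Sigma_H|_{g_+}\ge|\partial M|_{g_+}=A$; combined with the first ingredient this gives $m_{ADM}(M,g_+)\ge\sqrt{A/16\pi}\ge m^{nh}$. In either case $m^{nh}\le m_{ADM}(M,g_+)$, and taking the infimum over all outward-minimizing (respectively strictly outward-minimizing) Type 3 admissible extensions finishes the proof.

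I expect the main obstacle to be the careful handling of the two non-degeneracy conditions in the first ingredient: one must verify that the Mantoulidis--Schoen extension, whose minimal inner boundary is identified with $\partial\Omega$, genuinely satisfies ``$M$ contains no compact minimal surface surrounding $\partial M$'' --- i.e.\ that $\partial M$ is not itself counted as a surrounding horizon and that the collar and the Schwarzschild exterior contribute no further minimal surfaces --- and that gluing with the strict mean-curvature drop $H_->H_+=0$ really produces a bona fide Type 3 admissible extension. A secondary point needing care is the existence and enclosure properties of the outermost minimal surface $\Sigma_H$ in the presence of the only weakly mean-convex boundary $\partial M$, so that the Riemannian Penrose inequality can be applied to the exterior region and then compared against the outward-minimizing property.
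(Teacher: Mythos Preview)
Your approach uses the same two key tools as the paper --- the Mantoulidis--Schoen construction and the Riemannian Penrose inequality --- but organizes them differently. The paper splits into three cases according to whether $m_o<\mu$, $m_o=\mu$, or $m_o>\mu$ (with $\mu=\sqrt{A/16\pi}$); your dichotomy on each individual outward-minimizing extension (surrounding horizon present or not), combined with the a priori bound $m^{nh}\le\mu$, is arguably a cleaner packaging and avoids the trichotomy.

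However, your first ingredient has a genuine gap, one you correctly flag but do not resolve. You use the raw Mantoulidis--Schoen extension, whose boundary $\partial N$ is \emph{minimal}, and assert that $\partial N$ ``does not surround $\partial N$''. Under the paper's definition (a surface surrounds $\partial M$ if every path from infinity to $\partial M$ meets it), $\partial M$ surrounds itself, so a minimal boundary violates the no-surrounding-horizons condition $\cN$. The paper handles exactly this point in its Case~2: it perturbs the Mantoulidis--Schoen metric by a harmonic conformal factor $u$ with $u\equiv 1$ on $\partial M$ and $u\to a>1$ at infinity. By the Hopf lemma this pushes the boundary mean curvature from $0$ to a small positive value, preserves the foliation by positive-mean-curvature surfaces (so the extension now contains \emph{no} compact minimal surfaces at all), keeps $H'<H_-$ for $a$ close to $1$ (so the result remains a Type~3 admissible extension), and changes the ADM mass by an arbitrarily small amount. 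With this perturbation inserted into your first ingredient, your argument goes through.
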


\begin{proof}
Call these $m_B^{(3)}(\Omega)$ values $m_o$ and $m_h$, respectively, and let $\mu = \sqrt{\frac{|\partial \Omega|_{g_-}}{16\pi}}$. It is sufficient to prove the inequality for the non-strict outward-minimizing condition.

We consider three cases, according to how the values of $m_o$ and $\mu$ compare. Let $\epsilon >0$.

\smallskip

First, suppose $m_o <\mu$. Then by the definition of $m_B^{(3)}$, there exists a Type 3 admissible extension of $(\Omega, g_-)$, say $(M,g_+)$,  for which $\partial M$ is outward-minimizing, such that
\begin{align}
m_{ADM}(M,g_+) &< \mu, \text{ and} \label{eqn_adm_ineq}\\
m_{ADM}(M,g_+) &< m_o + \epsilon. \nonumber
\end{align}
Suppose that $(M,g_+)$ contains a compact minimal surface surrounding $\partial M$. Then $(M,g_+)$ contains an outermost minimal surface $S$ surrounding $\partial M$, which is outward-minimizing (again, see \cite[Lemma 4.1]{HI} and the references therein). Then by the Riemannian Penrose inequality (particularly Bray's version that allows for $S$ to be disconnected \cite{Br}), we have
$$m_{ADM}(M,g_+) \geq \sqrt{\frac{|S|_{g_+}}{16\pi}} \geq  \sqrt{\frac{|\partial M|_{g_+}}{16\pi}}=\mu,$$
where the second inequality holds because $\partial M$ is outward-minimizing. This contradicts \eqref{eqn_adm_ineq}.

Thus $(M,g_+)$ has no compact minimal surfaces surrounding $\partial M$, so it is a valid competitor for the  definition of Bartnik mass corresponding to $m_h$. 
In particular
$$m_h \leq m_{ADM}(M,g_+) < m_o + \epsilon.$$ 
Letting $\epsilon \searrow 0$, the proof in the first case is complete.

\smallskip

Second, suppose that $m_o = \mu$. In \cite[Theorem 2.1]{MS}, Mantoulidis and Schoen construct an admissible extension $(M, g_+)$ of $(\Omega, g_-)$ for which $\partial M$ has zero mean curvature and is strictly outward-minimizing, such that  $M \setminus \partial M$ has a foliation by surfaces of positive mean curvature, and for which the ADM mass of $(M,g_+)$ is at most $ \mu + \epsilon$. (This uses the hypothesis $\lambda_1(-\Delta +K) > 0$ and the 2-sphere topology).  

We will perturb $g_+$ slightly so that $\partial M$ is not minimal. To do so, solve the linear elliptic problem, for a constant $a>1$:
$$\begin{cases}
\Delta u = 0 & \text{ on } (M,g_+)\\
u \to a & \text{ at infinity}\\
u =1 & \text{ on } \partial M.
\end{cases}$$
There exists a unique solution that is smooth and positive. By similar reasoning as in the proof of Lemma \ref{lemma_wlg}, the conformal metric $g'=u^4g_+$ is asymptotically flat with nonnegative scalar curvature. Moreover, $g'$ induces the same metric on $\partial M$ as $g_+$ and endows $\partial M$ with positive mean curvature $H'$. For $a>1$ sufficiently close to 1, the ADM mass of $g'$ is within $\epsilon$ that of $g$ (as follows from \eqref{eqn_adm_change}), $H'$ is pointwise less than $H_-$, and $(M,g')$ has a foliation by surfaces of positive mean curvature.  Thus, $(M,g')$ is a Type 3 admissible extension  of $(\Omega, g_-)$ that contains no compact minimal surfaces at all (by the comparison principle for mean curvature).  This shows 
$$m_h \leq m_{ADM}(M,g') < m_{ADM}(M, g_+) + \epsilon \leq \mu + 2\epsilon = m_o + 2\epsilon.$$
Since $\epsilon>0$ is arbitrary, this completes the proof in the second case.

For the third case, $m_o > \mu$, shrink $\epsilon$ if necessary so that $0 < \epsilon < m_o - \mu$. Taking the admissible extension $(M,g_+)$ as in \cite[Theorem 2.1]{MS} of ADM mass at most $\mu +\epsilon$, we obtain a Type 3 admissible extension of $(\Omega, g_-)$ for which the boundary is outward-minimizing. Then
$$m_o \leq m_{ADM}(M,g_+) \leq  \mu + \epsilon,$$
a contradiction.
\end{proof}

We point out that the third case of the proof above also gives the following coarse upper bound for the Bartnik mass.

\begin{prop}
\label{prop_bmass_estimate}
Let $\cN$ be one of the following: ``$\partial M$ is strictly outward-minimizing in $M$,'' ``$\partial M$ is outward-minimizing in $M$,'' ``$M$ contains no compact minimal surfaces,'' or ``$M$ contains no compact minimal surfaces surrounding $\partial M$.'' Then for any allowable region $(\Omega, g_-)$, with $\lambda_1(-\Delta +K)>0$ and $\partial \Omega$ topologically spherical,
\begin{equation}
\label{eqn_bmass_estimate}
m_B^{(3)}(\Omega, g_-) \leq \sqrt{\frac{|\partial \Omega|_{g_-}}{16\pi}},
\end{equation}
where $m_B^{(3)}$ is taken with any of the above choices of $\cN$.
\end{prop}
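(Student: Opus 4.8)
The plan is to exhibit, for every $\epsilon > 0$, Type 3 admissible extensions of $(\Omega, g_-)$ whose ADM mass is at most $\mu + 2\epsilon$ and which satisfy each of the four listed versions of $\cN$, and then to let $\epsilon \searrow 0$; here $\mu := \sqrt{\frac{|\partial \Omega|_{g_-}}{16\pi}}$. Both of the required extensions already occur inside the proof of Theorem \ref{thm_ineq}, so this proposition is essentially a matter of isolating those constructions. The only genuine input is the Mantoulidis--Schoen extension \cite[Theorem 2.1]{MS}, which is available precisely because $\lambda_1(-\Delta + K) > 0$ and $\partial \Omega$ is a topological $2$-sphere; everything else is bookkeeping already performed in Lemma \ref{lemma_wlg} and in the second and third cases of the proof of Theorem \ref{thm_ineq}.

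First I would dispose of the two outward-minimizing conditions. Given $\epsilon > 0$, apply \cite[Theorem 2.1]{MS} to obtain an asymptotically flat manifold $(M, g_+)$ with $R \geq 0$, with $\partial M$ isometric to $(\partial \Omega, g_-|_{T\partial \Omega})$, with $\partial M$ of zero mean curvature and strictly outward-minimizing, foliated away from $\partial M$ by surfaces of positive mean curvature, and with $m_{ADM}(M, g_+) \leq \mu + \epsilon$. Since $H_+ = 0 < H_-$ (the latter because $(\Omega,g_-)$ is an allowable region), the manifold $(M, g_+)$ is a Type 3 admissible extension, and since $\partial M$ is strictly outward-minimizing it is a fortiori outward-minimizing; hence it competes for $m_B^{(3)}(\Omega, g_-)$ under both ``$\partial M$ strictly outward-minimizing'' and ``$\partial M$ outward-minimizing,'' giving $m_B^{(3)}(\Omega, g_-) \leq \mu + \epsilon$ for these two choices of $\cN$, and then \eqref{eqn_bmass_estimate} upon letting $\epsilon \searrow 0$.

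Next I would handle the two no-minimal-surface conditions by perturbing this same $(M, g_+)$ conformally, exactly as in the second case of the proof of Theorem \ref{thm_ineq}: solve $\Delta u = 0$ on $(M, g_+)$ with $u = 1$ on $\partial M$ and $u \to a$ at infinity for a constant $a > 1$, and set $g' = u^4 g_+$ (adjusting by a dilation near infinity to remain within the definition of asymptotic flatness used here, as in Lemma \ref{lemma_wlg}). Then $g'$ has $R \geq 0$, agrees with $g_+$ on $T\partial M$, and by the Hopf maximum principle $\partial M$ acquires positive mean curvature $H'$; for $a$ close enough to $1$ one obtains $H' < H_-$ pointwise (so the extension is Type 3), the positive-mean-curvature foliation survives, and $m_{ADM}(M, g') < m_{ADM}(M, g_+) + \epsilon \leq \mu + 2\epsilon$ by \eqref{eqn_adm_change}. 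The surviving foliation together with $H' > 0$ on $\partial M$ forces, via the comparison principle for mean curvature, that $(M, g')$ contains no compact minimal surface whatsoever, hence none surrounding $\partial M$; so $(M, g')$ competes for $m_B^{(3)}(\Omega, g_-)$ under both remaining conditions, giving $m_B^{(3)}(\Omega, g_-) \leq \mu + 2\epsilon$ and, as $\epsilon \searrow 0$, \eqref{eqn_bmass_estimate}. The hard part is over by citation: the only step needing a modicum of care is simultaneously arranging the strict inequality $H' < H_-$ and the ADM mass bound $\leq \mu + 2\epsilon$ under the conformal perturbation, which is the same trade-off already balanced in the earlier proofs.
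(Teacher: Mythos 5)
Your proof is correct and takes essentially the same route the paper intends: apply the Mantoulidis--Schoen extension \cite[Theorem 2.1]{MS} directly for the two outward-minimizing versions of $\cN$, and perform the conformal deformation $g' = u^4 g_+$ (as in Lemma \ref{lemma_wlg} and the second case of Theorem \ref{thm_ineq}) to lift the boundary mean curvature off zero for the two no-minimal-surfaces versions. One point worth noting: the paper attributes the Proposition to ``the third case'' of the proof of Theorem \ref{thm_ineq}, which literally only produces an extension with outward-minimizing but \emph{minimal} boundary, so the conformal step you carry out for the no-minimal-surfaces conditions is not optional bookkeeping but is genuinely needed (without it, $\partial M$ itself is a compact minimal surface in $M$); you handle this correctly.
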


\begin{remark}
We conjecture that strict inequality holds in \eqref{eqn_bmass_estimate} (recalling that $H_->0$ is assumed in the definition of allowable region), but do not pursue this here. When $H_-$ is sufficiently large, this may follow from Bartnik mass estimates in \cites{LS,CCMM}.
\end{remark}

\section*{Appendix: scalar curvature deformation theorem of Brendle--Marques--Neves}
Here we recall a theorem of Brendle, Marques, and Neves that was used in their counterexamples to the Min-Oo conjecture:

\begin{thm}(Theorem 5 of \cite{BMN}.)
\label{thm_BMN}
Let $M$ be a smooth manifold with compact boundary $\partial M$, and let $g$ and $\tilde g$ be two smooth Riemannian metrics on $M$ such that $g -\tilde g\equiv 0$ on $\partial M$. Assume the mean curvatures of $\partial M$ (in the direction pointing out of $M$) satisfy $H > \tilde H$. Then given any $\epsilon>0$ and any neighborhood $U$ of $\partial M$, there exists a smooth Riemannian metric $\hat g$ on $M$ with the following properties:
\begin{itemize}
\item $\hat R(x) \geq \min\{R(x), \tilde R(x)\} - \epsilon$ for each $x \in M$ (where $R, \tilde R,$ and $\hat R$ are the scalar curvatures of $g, \tilde g,$ and $\hat g$, respectively).
\item $\hat g = g$ on $M \setminus U$.
\item  $\hat g = \tilde g$ on some neighborhood of $\partial M$.
\end{itemize}
\end{thm}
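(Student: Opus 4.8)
The theorem is quoted from \cite{BMN}, so the plan is to reproduce the architecture of their argument. Since $\hat g$ is required to equal $g$ outside the prescribed neighborhood $U$ of $\partial M$, the construction is purely local near $\partial M$: it suffices to build $\hat g$ on a collar $C \cong \partial M \times [0,t_0]$, equal to $\tilde g$ on a (small, of our choosing) sub-collar near $t=0$, equal to $g$ near $t=t_0$ so that it glues smoothly to $g$ on $M \setminus C$, and satisfying $\hat R \ge \min(R,\tilde R)-\epsilon$ on $C$. Choose $t_0$ small enough that the width-$t_0$ $g$-collar about $\partial M$ lies inside $U$; since $g$ and $\tilde g$ are fixed and smooth, after shrinking $t_0$ their scalar curvatures vary by less than $\epsilon/2$ across the collar, so it is enough to arrange $\hat R \ge \min(R,\tilde R)|_{\partial M}-\epsilon/2$ there.

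Next, normalize the gauge. Using the $g$-normal exponential map, write $g = dt^2 + h_t$ on $C$, with $h_t$ the induced metric on the $g$-distance surface $\Sigma_t$. Because $g=\tilde g$ as tensors along $\partial M$, in these coordinates $\tilde g$ has the form $dt^2 + h_0 + O(t)$; flowing by the $\tilde g$-gradient of $t$ gives a diffeomorphism equal to the identity on $\partial M$, after which we may also assume $\tilde g = dt^2 + \tilde h_t$ with $\tilde h_0 = h_0$. The essential tool is the second-variation (Gauss) identity used in the proof of Lemma \ref{lemma_local_extension}: for any metric $dt^2 + \gamma_t$ in dimension three,
$$R = R_{\gamma_t} - \|A_t\|^2 - H_t^2 - 2\,\partial_t H_t, \qquad A_t = -\tfrac12\partial_t\gamma_t,\quad H_t = \operatorname{tr}_{\gamma_t}A_t.$$
In this normal-form language, the hypothesis $H > \tilde H$ on the outward mean curvatures says that at $t=0$ the traces of $A_t$ for $g$ and for $\tilde g$ differ with a definite sign; this is the single piece of data that makes the construction possible.

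The heart of the proof is the interpolation across the transition region $[\delta,\,t_0-\delta]$. The naive convex combination $\gamma_t = \chi(t)\tilde h_t + (1-\chi(t))h_t$ fails: $R_{\gamma_t}$ and $-2\partial_t H_t$ then acquire terms involving $\chi'$ and $\chi''$, of size $t_0^{-1}$ and $t_0^{-2}$, that cannot be absorbed into the error. Instead one interpolates in a way that routes the \emph{mean curvature} monotonically through the collar: one introduces a reparametrization $t\mapsto\psi(t)$ of the normal variable and assembles $\gamma_t$ from the two Fermi families composed with $\psi$ together with a slow transition, arranged so that $\partial_t H_{\gamma_t}$ has the favorable sign throughout (so $-2\,\partial_t H_{\gamma_t} \ge 0$), while $\|A_t\|^2 + H_t^2$ and the intrinsic term $R_{\gamma_t}$ stay within $\epsilon/2$ of their endpoint values. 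The sign of the gap $H>\tilde H$ is precisely the one needed for such a monotone mean-curvature transition to exist: the mean curvature is free to move in the required direction as $t$ runs across the collar, and the resulting nonnegative contribution of $-2\,\partial_t H_{\gamma_t}$ compensates the unavoidable interpolation error in the remaining terms. This is the step carried out in detail in \cite{BMN}, and it is genuinely the crux.

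Finally, account for the remaining, tangential discrepancies between $\gamma_t$ and the exact target families $h_t$, $\tilde h_t$; after the mean-curvature routing these are of lower order in $t$, and a standard cutoff interpolation of them contributes to $R_{\gamma_t}$ only zeroth-order-in-$t$ terms, made $<\epsilon/2$ by taking the collar thin. One then checks that the resulting $\hat g$ is smooth, agrees with $\tilde g$ on $\partial M \times [0,\delta)$ and with $g$ on $\partial M \times (t_0-\delta,t_0]$ — hence extends by $g$ to a smooth metric on all of $M$ equal to $g$ on $M\setminus U$ — and that $\hat R \ge \min(R,\tilde R)-\epsilon$ everywhere. The only real obstacle is the interpolation step: a direct combination of the two metrics destroys the scalar-curvature lower bound, and essentially all the force of the hypothesis $H>\tilde H$ is spent in organizing the deformation so that the leading error term carries the correct sign; localization, gauge normalization, smoothing, and the tangential corrections are routine.
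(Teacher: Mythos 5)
There is a genuine gap, and also a mismatch with what actually happens in the source. First, note that the paper does not prove Theorem \ref{thm_BMN} at all: it is quoted verbatim as Theorem 5 of \cite{BMN}, and the only original content of the appendix is the supplementary observation that the deformation $\hat g_\lambda$ can be made $C^0$-small, extracted from the explicit formula for $\hat g_\lambda$. Your proposal, by contrast, undertakes to reprove the theorem but then explicitly defers the decisive step (``This is the step carried out in detail in \cite{BMN}, and it is genuinely the crux''). An argument whose crux is a citation is not a proof; everything you actually carry out (localization to a collar, shrinking the collar so the scalar curvatures vary by $\epsilon/2$, the second-variation identity) is the routine part.

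Second, the mechanism you sketch for the crux is not the one used in \cite{BMN}, and it is not clear it can be made to work. Brendle--Marques--Neves do not reparametrize the normal variable or ``route the mean curvature monotonically'' through a family $dt^2+\gamma_t$. They write $\tilde g = g + \rho T$ near $\partial M$ (with $\rho$ a boundary-defining function and $T$ smooth) and set $\hat g_\lambda = g + f_\lambda(\rho)\,T$ for an explicit profile $f_\lambda$ depending on a large parameter $\lambda$, with two regimes separated at $\rho = e^{-\lambda^2}$; the resulting metric is not of the form $dt^2+\gamma_t$. The dangerous term in the scalar curvature expansion is $f_\lambda''(\rho)$ contracted against the tangential trace of $T$, and the hypothesis $H > \tilde H$ is used precisely to guarantee that this trace is positive, so that a concave choice of profile makes the large second-derivative term contribute with the favorable sign. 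Your alternative scheme has an additional unaddressed obstruction: you cannot simultaneously put $g$ and $\tilde g$ in the form $dt^2 + h_t$ and $dt^2 + \tilde h_t$ in the same coordinates, since their normal geodesics differ; after fixing Fermi coordinates for $g$, the metric $\tilde g$ acquires lapse and shift terms of order $t$ that your interpolation never accounts for. To repair the write-up, either reproduce the actual $g + f_\lambda(\rho)T$ construction with the scalar curvature expansion from \cite{BMN}, or simply cite the theorem as the paper does.
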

Although this result is stated in \cite{BMN} for compact manifolds with boundary, the construction is localized near the boundary, so that only a compact boundary is required.

One fact we need, beyond what is explicitly stated in Theorem \ref{thm_BMN}, is that the deformation can be made arbitrarily small in $C^0$ norm relative to $g$. To see this, we will require a few details of the construction. Let $\rho \geq 0$ be a smooth function on $M$ with $\rho^{-1}(0) =\partial M$, $|\nabla \rho|_g = 1$ on $\partial M$, and $\rho \geq 1$ outside a compact set containing $\partial M$. There exists a smooth, symmetric, covariant 2-tensor $T$ on $M$, vanishing on $M \setminus U$, so that on some neighborhood of $\partial M$, $\tilde g = g + \rho T$. There are also smooth cut-off functions $\chi, \beta$ whose details are not important here, except that they are bounded. The metric $\hat g$ will be chosen to be $\hat g_\lambda$, defined below, for a sufficiently large value $\lambda$:
\begin{equation*}
\hat g_\lambda = \begin{cases}
g + \lambda^{-1} \chi(\lambda \rho) T, & \text{ for } \rho \geq e^{-\lambda^2},\\
\tilde g  - \lambda \rho^2 \beta(\lambda^{-2} \log \rho) T, & \text{ for } \rho < e^{-\lambda^2},
\end{cases}
\end{equation*}

We claim that $\|\hat g_\lambda - g\|_{C^0(M)} \to 0$ as $\lambda \to \infty$. For the region $\{\rho \geq e^{-\lambda^2}\}$, ${\|\hat g_\lambda-g\|_{C^0(M)}}$ is $O(\lambda^{-1})$, since $\chi$ and $T$ are bounded independently of $\lambda$. On  ${\{\rho < e^{-\lambda^2}\}}$, $\|\hat g_\lambda -\tilde g\|_{C^0(M)}$ is $O(\lambda e^{-2\lambda^2})$. However, $\tilde g$ and $g$ agree on $\partial M$, and $\{\rho < e^{-\lambda^2}\}$ becomes arbitrarily close to $\partial M$, so we have convergence  of $\hat g_\lambda$ to $g$ in $C^0(M)$ as $\lambda \to \infty$.

\begin{bibdiv}
 \begin{biblist}

\bib{AJ}{article}{
	author={Anderson, M.},
	author={Jauregui, J.},
	title={Embeddings, immersions and the Bartnik quasi-local mass conjectures},
	eprint={https://arxiv.org/abs/1611.08755}
}

 \bib{adm}{article}{
   author={Arnowitt, R.},
   author={Deser, S.},
   author={Misner, C.},
   title={Coordinate invariance and energy expressions in general relativity},
   journal={Phys. Rev. (2)},
   volume={122},
   date={1961},
   pages={997--1006},
}

\bib{Bamler}{article}{
	author={Bamler, R.},
	title={A Ricci flow proof of a result by Gromov on lower bounds for
		scalar curvature},
	journal={Math. Res. Lett.},
	volume={23},
	date={2016},
	number={2},
	pages={325--337}
}

\bib{Ba0}{article}{
   author={Bartnik, R.},
   title={The mass of an asymptotically flat manifold},
   journal={Comm. Pure Appl. Math.},
   volume={39},
   date={1986},
   number={5},
   pages={661--693},
}

\bib{Ba1}{article}{
   author={Bartnik, R.},
   title={New definition of quasilocal mass},
   journal={Phys. Rev. Lett.},
   volume={62},
   date={1989},
   number={20},
   pages={2346--2348}
}

\bib{Ba5}{article}{
   author={Bartnik, R.},
   title={Some open problems in mathematical relativity},
   conference={
      title={Conference on Mathematical Relativity},
      address={Canberra},
      date={1988},
   },
   book={
      series={Proc. Centre Math. Anal. Austral. Nat. Univ.},
      volume={19},
      publisher={Austral. Nat. Univ., Canberra},
   },
   date={1989},
   pages={244--268},
   review={\MR{1020805}},
}

\bib{Ba3}{article}{
   author={Bartnik, R.},
   title={Energy in general relativity},
   conference={
      title={Tsing Hua lectures on geometry \&\ analysis},
      address={Hsinchu},
      date={1990--1991},
   },
   book={
      publisher={Int. Press, Cambridge, MA},
   },
   date={1997},
   pages={5--27}
}

\bib{Ba4}{article}{
   author={Bartnik, R.},
   title={Mass and 3-metrics of non-negative scalar curvature},
   conference={
      title={Proceedings of the International Congress of Mathematicians,
      Vol. II},
   },
   book={
      publisher={Higher Ed. Press, Beijing},
   },
   date={2002},
   pages={231--240}
}

\bib{Br}{article}{
   author={Bray, H.},
   title={Proof of the Riemannian Penrose inequality using the positive mass
   theorem},
   journal={J. Differential Geom.},
   volume={59},
   date={2001},
   number={2},
   pages={177--267}
}

\bib{BC}{article}{
   author={Bray, H.},
   author={Chru\'sciel, P.},
   title={The Penrose inequality},
   conference={
      title={The Einstein equations and the large scale behavior of
      gravitational fields},
   },
   book={
      publisher={Birkh\"auser, Basel},
   },
   date={2004},
   pages={39--70}
}

\bib{BL}{article}{
   author={Bray, H.},
   author={Lee, D.},
   title={On the Riemannian Penrose inequality in dimensions less than
   eight},
   journal={Duke Math. J.},
   volume={148},
   date={2009},
   number={1},
   pages={81--106}
}

\bib{BMN}{article}{
   author={Brendle, S.},
   author={Marques, F. C.},
   author={Neves, A.},
   title={Deformations of the hemisphere that increase scalar curvature},
   journal={Invent. Math.},
   volume={185},
   date={2011},
   number={1},
   pages={175--197}
}

\bib{CCMM}{article}{
   author={Cabrera Pacheco, A.},
   author={Cederbaum, C.},
   author={McCormick, S.},
   author={Miao, P.},
   title={Asymptotically flat extensions of CMC Bartnik data},
   journal={Classical Quantum Gravity},
   volume={34},
   date={2017}
}

\bib{CW}{article}{
   author={Chen, P.-N.},
   author={Wang, M.-T.},
   title={Rigidity and minimizing properties of quasi-local mass},
   conference={
      title={Surveys in differential geometry 2014. Regularity and evolution
      of nonlinear equations},
   },
   book={
      series={Surv. Differ. Geom.},
      volume={19},
      publisher={Int. Press, Somerville, MA},
   },
   date={2015},
   pages={49--61}
}

\bib{Ch}{article}{
   author={Chru\'sciel, P.},
   title={Boundary conditions at spatial infinity from a Hamiltonian point
   of view},
   conference={
      title={Topological properties and global structure of space-time},
      address={Erice},
      date={1985},
   },
   book={
      series={NATO Adv. Sci. Inst. Ser. B Phys.},
      volume={138},
      publisher={Plenum, New York},
   },
   date={1986},
   pages={49--59}
}

\bib{Cor1}{article}{
   author={Corvino, J.},
   title={Scalar curvature deformation and a gluing construction for the
   Einstein constraint equations},
   journal={Comm. Math. Phys.},
   volume={214},
   date={2000},
   number={1},
   pages={137--189}
}

\bib{Cor2}{article}{
   author={Corvino, J.},
   title={A note on the Bartnik mass},
   conference={
      title={Nonlinear analysis in geometry and applied mathematics},
   },
   book={
      series={Harv. Univ. Cent. Math. Sci. Appl. Ser. Math.},
      volume={1},
      publisher={Int. Press, Somerville, MA},
   },
   date={2017},
   pages={49--75}
}

\bib{Fed}{book}{
   author={Federer, H.},
   title={Geometric measure theory},
   series={Die Grundlehren der mathematischen Wissenschaften, Band 153},
   publisher={Springer-Verlag New York Inc., New York},
   date={1969}
}

\bib{Gromov}{article}{
   author={Gromov, M.},
   title={Dirac and Plateau billiards in domains with corners},
   journal={Cent. Eur. J. Math.},
   volume={12},
   date={2014},
   number={8},
   pages={1109--1156}
}

\bib{HMM}{article}{
	author={Huang, L.-H.},
	author={Martin, D.},
	author={Miao, P.},
	title={Static potentials and area minimizing hypersurfaces},
	journal={Proc. Amer. Math. Soc.},
	volume={146},
	date={2018},
	number={6},
	pages={2647--2661}
}

\bib{HI}{article}{
   author={Huisken, G.},
   author={Ilmanen, T.},
   title={The inverse mean curvature flow and the Riemannian Penrose
   inequality},
   journal={J. Differential Geom.},
   volume={59},
   date={2001},
   number={3},
   pages={353--437},
}

\bib{Jau}{article}{
   author={Jauregui, J.},
   title={Fill-ins of nonnegative scalar curvature, static metrics, and
   quasi-local mass},
   journal={Pacific J. Math.},
   volume={261},
   date={2013},
   number={2},
   pages={417--444}
}

\bib{Jau2}{article}{
   author={Jauregui, J.},
   title={On the lower semicontinuity of the ADM mass},
   journal={Comm. Anal. Geom.},
   volume={26},
   date={2018},
   number={1},
   pages={85--111}
}

\bib{JL}{article}{
   author={Jauregui, J.},
   author={Lee, D.},
   title={Lower semicontinuity of mass under $C^0$ convergence and Huisken's isoperimetric mass},
   journal={J. Reine Angew. Math., to appear}
}

\bib{LS}{article}{
   author={Lin, C.-Y.},
   author={Sormani, C.},
   title={Bartnik's mass and Hamilton's modified Ricci flow},
   journal={Ann. Henri Poincar\'e},
   volume={17},
   date={2016},
   number={10},
   pages={2783--2800}
}

\bib{MS}{article}{
   author={Mantoulidis, C.},
   author={Schoen, R.},
   title={On the Bartnik mass of apparent horizons},
   journal={Classical Quantum Gravity},
   volume={32},
   date={2015},
   number={20},
   pages={205002, 16}
}

\bib{Mc2}{article}{
	author={McCormick, S.},
	title={Gluing Bartnik extensions, continuity of the Bartnik mass, and the equivalence of definitions},
	eprint={https://arxiv.org/abs/1805.09792},
	date={2018}
}

\bib{Mc}{article}{
   author={McCormick, S.},
   title={The Hilbert manifold of asymptotically flat metric extensions},
   eprint={https://arxiv.org/abs/1512.02331},
   date={2015}
}

\bib{McSz}{article}{
   author={McFeron, D.},
   author={Sz\'ekelyhidi, G.},
   title={On the positive mass theorem for manifolds with corners},
   journal={Comm. Math. Phys.},
   volume={313},
   date={2012},
   number={2},
   pages={425--443}
}

\bib{Meyers}{article}{
	author={Meyers, N.},
	title={An expansion about infinity for solutions of linear elliptic
		equations. },
	journal={J. Math. Mech.},
	volume={12},
	date={1963},
	pages={247--264}
}

\bib{Mi1}{article}{
   author={Miao, P.},
   title={Positive mass theorem on manifolds admitting corners along a
   hypersurface},
   journal={Adv. Theor. Math. Phys.},
   volume={6},
   date={2002},
   number={6},
   pages={1163--1182}
}

\bib{Mi2}{article}{
   author={Miao, P.},
   title={Variational effect of boundary mean curvature on ADM mass in
   general relativity},
   conference={
      title={Mathematical physics research on the leading edge},
   },
   book={
      publisher={Nova Sci. Publ., Hauppauge, NY},
   },
   date={2004},
   pages={145--171}
}

\bib{Pe}{article}{
   author={Penrose, R.},
   title={Some unsolved problems in classical general relativity},
   conference={
      title={Seminar on Differential Geometry},
   },
   book={
      series={Ann. of Math. Stud.},
      volume={102},
      publisher={Princeton Univ. Press, Princeton, N.J.},
   },
   date={1982},
   pages={631--668}
   
}

\bib{SY}{article}{
	author={Schoen, R.},
	author={Yau, S.-T.},
	title={On the proof of the positive mass conjecture in general relativity},
	journal={Comm. Math. Phys.},
	volume={65},
	year={1979},
	pages={45--76},
}

\bib{ST}{article}{
   author={Shi, Y.},
   author={Tam, L.-F.},
   title={Positive mass theorem and the boundary behaviors of compact
   manifolds with nonnegative scalar curvature},
   journal={J. Differential Geom.},
   volume={62},
   date={2002},
   number={1},
   pages={79--125}
}

\bib{Sz}{article}{
    author={Szabados, L.},
    title={Quasi-local energy-momentum and angular momentum in general
relativity},
    journal={Living Rev. Relativity},
    volume={12},
    date={2009}
    }

\bib{W}{article}{
	author={Witten, E.},
	title={A new proof of the positive energy theorem},
	journal={Comm. Math. Phys.},
	volume={80},
	year={1981},
	pages={381-402},
}

 \end{biblist}
\end{bibdiv}

\end{document}